\newtheorem{theorem}{Theorem}[section]
\newtheorem{proposition}[theorem]{Proposition}
\newtheorem{definition}[theorem]{Definition}
\newtheorem{question}[theorem]{Question}
\def \R{\mathbb R}
\def \B{\mathbb B}
\newcommand \vol[2][n]{{\left|#2\right|}_{#1}}
\def \bKt{\overline{K_t}}
\def \bK{\bar K}
\title{On the volume of convolution bodies in the plane}
\author{J. Haddad}
\begin{document}
\begin{abstract}
	For every convex body $K \subset \R^n$ and $\delta \in (0,1)$, the $\delta$-convolution body of $K$ is the set of $x \in \R^n$ for which $\vol{K \cap (K+x)}\geq \delta \vol{K}$.
	%We show that there exists a planar convex body $K$ whose convolution body has larger volume than the convolution body of the Euclidean ball of the same volume as $K$.
	We show that for $n=2$ and any $\delta \in (0,1)$, ellipsoids do not maximize the volume of the $\delta$-convolution body of $K$, when $K$ runs over all convex bodies of a fixed volume.
	This behavior is somehow unexpected and contradicts the limit case $\delta \to 1^-$, which is governed by the Petty projection inequality.
\end{abstract}
\maketitle

\section{Introduction}
Let $K \subseteq \R^n$ be a convex body (compact, convex and with non-empty interior) and let $g_K(x) = \vol{K \cap (K+x)}$ denote the covariogram function, where $\vol{\ \cdot\ }$ is the $n$-dimensional Lebesgue measure.
For $\delta \in (0,1)$, the convolution body of $K$ of parameter $\delta$ is the set defined by
\[C_\delta K = \{x \in \R^n: g_K(x) \geq \delta \vol{K}\}.\]

The set $C_\delta K$ is called the convolution body of $K$, due to the fact that $g_K$ is the convolution of the indicator functions of $K$ and $-K$.
Convolution bodies and the covariogram function were studied in \cite{kiener1986extremalitat,MA, meyer1993volume, MS92, tsolomitis1997}. Specifically, in relation to the {\it phase retrieval problem} in Fourier analysis, it was studied in \cite{averkov2007retrieving, averkov2009confirmation, bianchi2005matheron}.

When $\delta \to 1^-$ the set $C_\delta K$ collapses to the origin. The shape of $C_\delta K$, if scaled by a factor $(1-\delta)^{-1}$, approaches the polar projection body of $K$ denoted by $\Pi^*K$, which is the unit ball of the norm defined by
\[\|v\|_{\Pi^*K} = \vol[n-1]{P_{v^\perp}K}\]
for every unit vector $v \in S^{n-1}$, where $P_{v^\perp}$ is the orthogonal projection to the hyperplane orthogonal to $v$.
This was first observed by Matheron in \cite{MA}, where the covariogram function was introduced.
Indeed it was proven in \cite[Theorem 2.2]{MS92} that
\begin{equation}
	\label{eq_schmuck_limit}
	\lim_{\delta \to 1^-} \frac {\vol{C_{\delta}K}}{(1-\delta)^n} = {\vol{\Pi^*K}}.
\end{equation}

The classical Petty projection inequality (see Section 10.9 of \cite{Sh1}) states that 
\begin{equation}
	\label{ineq-petty-projection}
	\vol{\Pi^*K}\leq \vol{\Pi^*B_K}
\end{equation}
where $B_K$ is the Euclidean ball with same volume as $K$.
Equality holds in \eqref{ineq-petty-projection} if and only if $K$ is an ellipsoid (an affine image of the Euclidean ball).
The left-hand side of inequality \eqref{ineq-petty-projection} is invariant under volume-preserving affine transformations.
This was proven by Petty in \cite{petty61_2}, and Schmuckenschläger gave a simpler proof of this fact using \eqref{eq_schmuck_limit} and the obvious fact that $C_\delta(\varphi(K)) = \varphi(C_\delta K)$ for every volume-preserving affine transformation $\varphi$.
%Inequality \eqref{ineq-petty-projection} is a powerful inequality. It implies many important results, like the classical isoperimetric inequality, and it is the geometric core of the affine Sobolev inequality of G. Zhang \cite{zhang}.
%It is invariant under affine transformations, and ...

At the opposite endpoint, $\delta \to 0^+$, the body $C_{\delta} K$ converges to the difference body of $K$, defined by
\[DK = \{x-y: x, y \in K\}.\]

By the Brunn-Minkowsky inequality (see \cite[Theorem 7.1.1]{Sh1}), $\vol{DK} \geq 2^n \vol{K}$, with equality if and only if $K$ is symmetric with respect to some point (i.e. $x_0 + K = x_0 - K$ for some $x_0 \in K$).
Since $B_K$ is origin-symmetric,
\begin{equation}
\label{ineq_C0}
\vol{DK} \geq \vol{D B_K},
\end{equation}
which is reverse to the inequality \eqref{ineq-petty-projection}.
Nevertheless, \eqref{ineq_C0} is an equality for all symmetric sets.

An extension of the Petty projection inequality to certain averages of volumes of $C_\delta K$ can be deduced from the results in \cite{kiener1986extremalitat}.
\begin{theorem}
	\label{res_weighted_inequality}
	For every non-decreasing function $\omega:[0,1] \to [0, \infty)$ and every convex body $K$,
	\[\int_0^1 \omega(\delta) \vol{C_\delta K} d\delta \leq \int_0^1 \omega(\delta) \vol{C_\delta B_K} d\delta.\]
\end{theorem}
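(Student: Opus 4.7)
The plan is to reduce the weighted-average inequality, via Fubini, to a one-parameter family of pointwise inequalities for convex functionals of the covariogram, and then to invoke the extremality of the Euclidean ball for the covariogram, which is Kiener's result from \cite{kiener1986extremalitat}.

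I would start with the layer-cake identity
\[
\int_0^1 \omega(\delta)\vol{C_\delta K}\,d\delta
=\int_{\R^n}\Omega\!\left(\frac{g_K(x)}{\vol{K}}\right)dx,\qquad \Omega(s):=\int_0^s \omega(\delta)\,d\delta,
\]
obtained by swapping the order of integration and using that $x\in C_\delta K$ iff $\delta\le g_K(x)/\vol{K}$. Since $\omega$ is non-decreasing and non-negative, $\Omega$ is convex, non-decreasing, and satisfies $\Omega(0)=0$. Writing $\omega(t)=\omega(0^+)+\nu((0,t])$ for the positive Stieltjes measure $\nu$ associated with $\omega$, Fubini yields the decomposition $\Omega(s)=\omega(0^+)s+\int_0^1(s-\tau)_+\,d\nu(\tau)$. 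Because $\int g_K\,dx=\vol{K}^2=\int g_{B_K}\,dx$, the linear term contributes equally on both sides, so the theorem reduces to the one-parameter family of inequalities
\[
\int\bigl(g_K(x)-\tau\vol{K}\bigr)_+\,dx\leq \int\bigl(g_{B_K}(x)-\tau\vol{B_K}\bigr)_+\,dx,\qquad \tau\in[0,1].
\]

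Each of these is an instance of Kiener's inequality \cite{kiener1986extremalitat}: among convex bodies of fixed volume, the Euclidean ball maximizes $\int F(g_K(x))\,dx$ for every convex $F\colon[0,\infty)\to[0,\infty)$; apply it with $F(s)=(s-\tau\vol{K})_+$. Kiener's proof, which is the substantive input, proceeds by iterated Steiner symmetrization. On each line parallel to the symmetrization direction $u$, the covariogram $g_K$ can be written as an integral of translated trapezoidal profiles arising from the $1$-dimensional slice intersections $K_w\cap(K_{w-z}+a)$, while $g_{SK}$ is the integral of the same profiles but all centered at the origin; a Brunn--Minkowski-type estimate then shows that this re-centering increases $\int F(\cdot)$ for every convex $F$. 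The main obstacle is therefore Kiener's Steiner-symmetrization step rather than anything specific to the weight $\omega$; once that input is granted, the rest is just Fubini plus the representation of convex non-decreasing $\Omega$ with $\Omega(0)=0$ as a positive combination of the elementary convex functions $s\mapsto s$ and $s\mapsto(s-\tau)_+$.
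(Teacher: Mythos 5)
Your proposal is correct and follows essentially the same route as the paper: both reduce the weighted inequality, via layer-cake/Fubini and the representation of the convex non-decreasing antiderivative of $\omega$ as a positive combination of $s\mapsto s$ and $s\mapsto(s-\tau)_+$, to Kiener's Steiner-symmetrization result that $\int\varphi(g_K)\,dx\le\int\varphi(g_{B_K})\,dx$ for convex non-decreasing $\varphi$, with the linear part cancelling because $\int g_K\,dx=\vol{K}^2$. The only cosmetic difference is that you apply Kiener to each elementary piece $(s-\tau)_+$ separately while the paper assembles a single test function $\varphi(t)=\int_0^1\omega'(\delta)(t-\delta)_+\,d\delta$ and applies it once.
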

The results in \cite{kiener1986extremalitat} follow from the well-known Riesz convolution inequality, and Theorem \ref{res_weighted_inequality} recovers the Petty projection inequality (without the equality case) thanks to \eqref{eq_schmuck_limit} and a limit argument.
Namely, one chooses $\omega$ to be an approximation of the Dirac delta at $1$.
Since $\omega$ must be non-decreasing, this argument cannot be applied to a Dirac delta at some other point in $(0,1)$.
A particular case of Theorem \ref{res_weighted_inequality} is that
\[\int_t^1\vol{C_\delta K} d\delta \leq \int_t^1 \vol{C_\delta B_K} d\delta\]
for any $t \in (0,1)$.

A second application of the Riesz convolution inequality to convex bodies defined from  $C_\delta K$, was given in \cite{haddad2022affine}.

A {\it radial set} is a set of the form
\[K = \{0\} \cup \{x \in \R^n \setminus \{0\}: |x| \leq \rho_K(x/|x|)\} \]
where $\rho_K :S^{n-1} \to [0,\infty)$ is continuous, and $|\,\cdot\,|$ is the Euclidean norm.
A {\it radial body} is a radial set for which $\rho_K$ is strictly positive.
Every convex body containing the origin is also a radial body.

For every convex body $K$ and $p > -1, p\neq 0$, the $p$-radial mean body of $K$ is the radial body defined by
\[\rho_{R_pK}(v) = \left( \int_0^1 \rho_{C_\delta K}(v)^p d \delta\right)^{1/p},\]
while $R_0 K$ is defined as a limit of the sets $R_p K$ when $p \to 0$.
The original definition given in \cite{GZ98} is different, but equivalent to ours.
This can be deduced easily from formulas (3), (16) and (17) in \cite{haddad2023affine}.
\begin{theorem}[{ \cite[Theorem 20]{haddad2022affine}}]
	\label{res_ludwig_haddad}
	For every convex body $K$ and $p \in (-1, n)$,
	\[\vol{R_p K} \leq \vol{R_p B_K}.\]
	For $p>n$ the inequality is reversed.
	Equality holds if and only if $K$ is an ellipsoid.
\end{theorem}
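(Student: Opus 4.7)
The plan is to reduce the volume comparison to a chord-power inequality and then apply a rearrangement argument. First, I would replace the integral over $\delta$ in the definition of $R_p K$ by a more geometric integral. For $p>0$ and fixed $v\in S^{n-1}$, the map $\delta \mapsto \rho_{C_\delta K}(v)$ is the inverse of the one-dimensional section $r\mapsto g_K(rv)/\vol{K}$, which is decreasing on $[0,\rho_{DK}(v)]$. A change of variables followed by integration by parts gives
\[
\rho_{R_p K}(v)^p \;=\; \frac{p}{\vol{K}}\int_0^\infty r^{p-1}\,g_K(rv)\,dr,
\]
and an application of Fubini to the covariogram along lines parallel to $v$ (each chord of length $\ell$ contributes $\max(\ell-r,0)$ to $g_K(rv)$) yields the chord formula
\[
\rho_{R_p K}(v)^p \;=\; \frac{1}{(p+1)\vol{K}}\int_{v^\perp}\ell_v(y)^{p+1}\,dy,
\]
where $\ell_v(y)$ is the length of the chord of $K$ in direction $v$ through the point $y \in v^\perp$. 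An analogous formula for $p \in (-1,0)$ follows from the same integration by parts, with a sign change absorbed into the constant.

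Inserting this into the polar-coordinate expression $\vol{R_p K}=\frac{1}{n}\int_{S^{n-1}}\rho_{R_p K}(v)^n\,dv$ gives
\[
\vol{R_p K} \;=\; \frac{1}{n\bigl((p+1)\vol{K}\bigr)^{n/p}}\int_{S^{n-1}}\left(\int_{v^\perp}\ell_v(y)^{p+1}\,dy\right)^{n/p}dv.
\]
Since $\vol{K}=\vol{B_K}$, the claim reduces to comparing the sphere integral of directional chord-power integrals $\int_{v^\perp}\ell_v(y)^{p+1}\,dy$, raised to the power $n/p$, between $K$ and $B_K$. The case $p=n$ is trivial (the exponent is $1$, the sphere integral collapses by Fubini to $\int g_K$, which equals $\vol{K}^2$, so $\vol{R_n K}=\vol{K}$), and $p\to -1^+$ recovers the Petty projection inequality \eqref{ineq-petty-projection} via \eqref{eq_schmuck_limit}.

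For general $p\in(-1,n)$ the main obstacle is the nonlinear exponent $n/p$: Theorem \ref{res_weighted_inequality} is linear in $\vol{C_\delta K}$, so it does not apply directly, and Minkowski's integral inequality only bounds $\vol{R_p K}^{p/n}$ by $\int_0^1\vol{C_\delta K}^{p/n}d\delta$, which is not what Theorem \ref{res_weighted_inequality} controls. I would overcome this by expanding the $n/p$-power as an iterated integral (for $p<n$, $n/p>1$; the opposite direction for $p>n$) to turn the whole expression into a multilinear integral of copies of $\mathbf{1}_K$ against a radially symmetric kernel, at which point the Brascamp-Lieb-Luttinger rearrangement inequality applies, replacing each $\mathbf{1}_K$ by $\mathbf{1}_{B_K}=\mathbf{1}_K^*$. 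The direction of the resulting inequality matches the direction predicted by whether $n/p$ is greater or smaller than $1$, explaining the switch at $p=n$.

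The equality case is the most delicate step. Iterated use of Brascamp-Lieb-Luttinger leaves the equality analysis of Burchard's characterization theorem, which forces each $\mathbf{1}_K$ to be a translate of its symmetrization modulo a common affine transformation compatible with the kernel; combined with the convexity of $K$, this gives that $K$ is an ellipsoid. Because this equality argument is the most technical part, in practice I would either rely on the corresponding analysis already established in \cite{haddad2022affine} or, alternatively, reprove it using the rigidity of Steiner symmetrization for chord-power integrals.
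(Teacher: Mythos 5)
You should note at the outset that the paper does not prove this statement: Theorem \ref{res_ludwig_haddad} is imported verbatim from \cite{haddad2022affine} (Theorem 20 there), so there is no internal proof to compare yours against. Judged on its own terms, your reduction is correct as far as it goes: the passage from $\int_0^1\rho_{C_\delta K}(v)^p\,d\delta$ to $\frac{p}{\vol{K}}\int_0^\infty r^{p-1}g_K(rv)\,dr$ and then to the chord-power integral $\frac{1}{(p+1)\vol{K}}\int_{v^\perp}\ell_v(y)^{p+1}\,dy$ is the classical Gardner--Zhang identity, and your endpoint checks ($\vol{R_n K}=\vol{K}$, and the $p\to -1^+$ limit recovering the Petty projection inequality) are right.

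The gap is in the central step, which is the entire content of the theorem. First, $n/p$ is not an integer in general, so ``expanding the $n/p$-power as an iterated integral'' of copies of $\mathbf{1}_K$ is not available; multilinearization of $F^{q}$ requires $q\in\mathbb{N}$. Second, even when $n/p\in\mathbb{N}$, the resulting integral runs over tuples of \emph{collinear} increments $r_1v,\dots,r_mv$, i.e.\ against a measure supported on a lower-dimensional subset of $(\R^n)^m$; this is not of the Brascamp--Lieb--Luttinger form $\int\prod_i f_i\bigl(\sum_j a_{ij}x_j\bigr)\,dx$ with a symmetric-decreasing kernel, so one cannot simply ``apply BLL'' --- this is precisely where a genuinely different device is needed (a rearrangement inequality adapted to the spherical/flag structure, or Blaschke--Petkantschin plus Steiner symmetrization in the spirit of Busemann's arguments). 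Third, your heuristic for the direction reversal (``whether $n/p$ is greater or smaller than $1$'') is inconsistent with the statement: for $p\in(-1,0)$ one has $n/p<0<1$, yet the inequality goes the same way as for $p\in(0,n)$; you have not tracked the two separate sign considerations (the monotonicity of $x\mapsto x^{n/p}$ versus the direction of the rearrangement step), and a BLL-type inequality by itself never reverses, so the $p>n$ case is asserted rather than derived. Finally, deferring the equality case to \cite{haddad2022affine} is circular if the aim is to prove the theorem. In short: the setup is the standard and correct first step, but the inequality itself is not established.
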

It was proven in \cite{GZ98} that $R_p K$ approaches $\Pi^*K$ when $p \to -1^+$, so Theorem \ref{res_ludwig_haddad} is yet an other extension of the Petty projection inequality involving averages of $C_\delta K$.

Theorems \ref{res_weighted_inequality} and \ref{res_ludwig_haddad} suggest the possibility that for a fixed $\delta \in (0,1)$, $\vol{C_\delta K}$ is also maximized by ellipsoids, among sets of a fixed volume.
Of course, due to \eqref{ineq_C0} this is only possible if we restrict the problem to the symmetric case, or to some range of $\delta \in (0,1)$ far from $0$.
Let us formulate the weakest possible question:
%The purpose of this paper is to study the following natural question:

\begin{question}
	\label{thequestion}
	Is there a value of $\delta \in (0,1)$ such that
	\begin{equation}
		\label{ineq_thequestion}
		\vol{C_{\delta } K} \leq \vol{C_{\delta} B_K}
	\end{equation}
for every symmetric convex body $K$?
\end{question}
The purpose of this paper is to give a complete answer to this question in dimension $2$.

Observe that due to Theorem \ref{res_weighted_inequality}, inequality \eqref{ineq_thequestion} holds ``in average'' in $\delta$ for every $K$.

The following proposition describes the situation in which $K$ is far from the set of ellipsoids.
Define the Banach-Mazur distance between two convex bodies $K,L \subseteq \R^n$ as
\[d_{\operatorname{BM}}(K,L) = \min\{ \lambda > 0 : K-x \subseteq \Phi(L-y) \subseteq e^\lambda (K-x) \text{ for } \Phi \in GL(n), x,y \in \R^n\}.\]
where $GL(n)$ is the set of invertible linear transformations of $\R^n$.
Let $\B$ be the unit Euclidean ball in $\R^n$.
It follows from the definition that $d_{\operatorname{BM}}(K,\B) = 0$ if and only if $K$ is an ellipsoid.
\begin{proposition}
	\label{res_far_from_ball}
	For every convex body $K \subseteq \R^n$ which is not an ellipsoid, $\vol{C_{\delta} K} \leq \vol{C_{\delta} B_K}$ for every $\delta > \varphi( d_{\operatorname{BM}}(K,\B))$, where $\varphi:[0,\infty) \to (0,1]$ is a continuous function with $\varphi(t)=1$ if and only if $t=0$.
\end{proposition}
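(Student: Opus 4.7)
The plan is to argue by contradiction using compactness on the space of affine equivalence classes of convex bodies, with the contradiction driven by the strict form of the Petty projection inequality via the Schmuckenschl\"ager limit \eqref{eq_schmuck_limit}. Throughout, I would use that both quantities in \eqref{ineq-petty-projection} transform compatibly under the affine group ($\vol{C_\delta\phi K} = |\det\phi|\vol{C_\delta K}$ for linear $\phi$, $C_\delta$ invariant under translations of $K$, and $B_{\phi K} = |\det\phi|^{1/n}B_K$), so the inequality $\vol{C_\delta K}\leq \vol{C_\delta B_K}$ descends to affine equivalence classes.

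Suppose the conclusion fails. Then there exist $t>0$, convex bodies $K_m$ with $d_{\operatorname{BM}}(K_m,\B)\geq t$, and $\delta_m\to 1^-$ such that $\vol{C_{\delta_m}K_m}>\vol{C_{\delta_m}B_{K_m}}$. After placing each $K_m$ in John position by a suitable affine change of coordinates, the sequence lies in a Hausdorff-bounded family ($\B\subseteq K_m\subseteq n\B$), so along a subsequence $K_m\to K$ in Hausdorff distance. Continuity of $d_{\operatorname{BM}}(\cdot,\B)$ forces $d_{\operatorname{BM}}(K,\B)\geq t>0$, so $K$ is not an ellipsoid.

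The crux is then a joint version of \eqref{eq_schmuck_limit}:
\[\frac{\vol{C_{\delta_m}K_m}}{(1-\delta_m)^n}\;\longrightarrow\;\vol{\Pi^*K}\quad\text{and}\quad\frac{\vol{C_{\delta_m}B_{K_m}}}{(1-\delta_m)^n}\;\longrightarrow\;\vol{\Pi^*B_K}.\]
To establish this I would pass to polar coordinates, $\vol{C_\delta K}=n^{-1}\int_{S^{n-1}}\rho_{C_\delta K}^n\,d\sigma$, and combine the Brunn--Minkowski concavity of $g_K^{1/n}$ on $DK$ with the directional derivative $\partial_s^+ g_K(sv)|_{s=0}=-\vol[n-1]{P_{v^\perp}K}$ to sandwich $\rho_{C_\delta K}(v)/(1-\delta)$ between two quantities whose ratio tends to $1$ at a rate depending only on $\delta$ (not on $K$ or $v$). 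This yields pointwise convergence uniform over bodies in John position, and dominated convergence---with dominating function obtained from the right side of the sandwich together with the uniform lower bound $\vol[n-1]{P_{v^\perp}K_m}\geq\vol[n-1]{P_{v^\perp}\B}$---upgrades this to the displayed volume limits. Combined with the strict case of the Petty projection inequality \eqref{ineq-petty-projection} applied to the non-ellipsoid $K$, this gives
\[\frac{\vol{C_{\delta_m}K_m}}{\vol{C_{\delta_m}B_{K_m}}}\;\longrightarrow\;\frac{\vol{\Pi^*K}}{\vol{\Pi^*B_K}}<1,\]
contradicting the choice of sequences.

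This establishes that $\psi(t):=\sup\{\delta^*(K):d_{\operatorname{BM}}(K,\B)\geq t\}$ satisfies $\psi(t)<1$ for every $t>0$, where $\delta^*(K)=\inf\{\delta_0\in[0,1]:\vol{C_\delta K}\leq\vol{C_\delta B_K}\;\forall\delta\geq\delta_0\}$. A continuous $\varphi\colon[0,\infty)\to(0,1]$ with $\varphi(0)=1$, $\varphi(t)<1$ for $t>0$, and $\varphi(t)\geq\psi(t)$ then exists by routine regularization of the monotone function $\psi$. The main obstacle is precisely the uniform (joint) convergence in \eqref{eq_schmuck_limit}: while the pointwise statement is classical, running the compactness argument requires an error estimate uniform across a Hausdorff-bounded family of bodies, which is exactly what the Brunn--Minkowski-based sandwich above is designed to provide. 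All other ingredients---the continuity of $K\mapsto\vol{\Pi^*K}$, the equality case of Petty, and Blaschke selection applied to the John-normalized family---are standard.
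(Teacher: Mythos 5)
Your argument is sound in outline but takes a genuinely different, and much softer, route than the paper. The paper is fully quantitative: it cites the two-sided bound $(1-\delta)^n\vol{\Pi^*K}\le\vol{C_\delta K}\le(-\log\delta)^n\vol{\Pi^*K}$ from \cite[Corollary 2]{MS92} and then invokes B\"or\"oczky's stability version of the Petty projection inequality, $\vol{\Pi^*K}\le(1-\gamma_n\, d_{\operatorname{BM}}(K,\B)^{1680 n})\vol{\Pi^*B_K}$, which together with $\frac{-\log\delta}{1-\delta}\le\delta^{-1}$ yields the explicit function $\varphi(t)=(1-\gamma_n t^{1680 n})^{1/n}$ with no compactness or limiting argument at all. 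You instead run a contradiction-by-compactness argument (John position, Blaschke selection, semicontinuity of $d_{\operatorname{BM}}(\cdot,\B)$) and need only the qualitative equality case of Petty; the price is that you must supply a version of \eqref{eq_schmuck_limit} that is uniform over a Hausdorff-bounded family, and that your $\varphi$ is non-constructive. That uniform statement is exactly the cited sandwich of \cite{MS92}, so your ``crux'' is available off the shelf; your proposed derivation of it is fine for one side (concavity of $g_K^{1/n}$ plus the derivative $-\vol[n-1]{P_{v^\perp}K}$ at the origin gives the upper bound $\rho_{C_\delta K}(v)\le n(1-\delta^{1/n})\vol{K}/\vol[n-1]{P_{v^\perp}K}$), but note that concavity bounds $g_K^{1/n}$ from \emph{above} by its tangent line at $0$ and therefore cannot produce the matching lower bound; for $\rho_{C_\delta K}(v)\ge(1-\delta)\vol{K}/\vol[n-1]{P_{v^\perp}K}$ you need the separate elementary estimate $g_K(sv)\ge\vol{K}-s\vol[n-1]{P_{v^\perp}K}$, obtained by Fubini from the fact that $K\setminus(K+sv)$ meets every line parallel to $v$ in a set of length at most $s$. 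With that one repair (or by simply citing \cite[Corollary 2]{MS92} as the paper does), your argument closes: the ratio $n(1-\delta^{1/n})/(1-\delta)\to1$ uniformly in $K$ and $v$, the continuity of $K\mapsto\vol{\Pi^*K}$ on bodies with $\B\subseteq K\subseteq n\B$ and the convergence $B_{K_m}\to B_K$ are standard, and the regularization of the monotone function $\psi$ is routine. In summary, the paper's approach buys an explicit $\varphi$ and a very short proof at the cost of importing a deep stability theorem; yours avoids stability entirely but loses all quantitative information.
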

We will prove this fact in Section \ref{sec_first_order}.
Proposition \ref{res_far_from_ball} reduces the problem to a local question:
If \eqref{ineq_thequestion} is valid for every $K$ sufficiently close to the Euclidean ball and $\delta$ close to $1$, then thanks to Proposition \ref{res_far_from_ball}, it is valid for every $K$ and $\delta$ close to $1$.

\begin{definition}
	\label{def_Kt}
For any radial set $K$ we will consider a one-parameter family of radial bodies $K_t$ defined by 
	\begin{equation}
		\label{eq_def_Kt}
		\rho_{K_t}(v) = 1 + t \rho_K(v).
	\end{equation}
We also define
	\begin{equation}
		\label{eq_def_Ktbar}
		\bKt = K_t/\vol{K_t}^{1/n}.
	\end{equation}
\end{definition}
We will say that a radial set $K$ is $C^\beta$ smooth with $\beta\geq 1$ if the radial function $\rho_K$ is $C^\beta$. Notice that this definition does not coincide with the smoothness of the set $\partial K$ as usual, because we are allowing $\rho_K(v)=0$.
But it is clear that if $K$ is $C^\beta$ smooth, then $K_t$ has a $C^\beta$ smooth boundary in the usual sense.

We will analyze $\vol{C_{\delta} \bKt}$ as a function of $t$ and $\delta$, for $t$ near $0$. First we obtain:
\begin{theorem}
	\label{res_first_variation}
	For every $C^1$ radial set $K \subseteq \R^n$ and $\delta \in (0,1)$, the function $t \mapsto \vol{C_{\delta} \bKt}$ is $C^1$ and we have 
	\[  \frac {\partial}{\partial t} \vol{C_{\delta} \bKt}\bigg|_{t=0} = 0.\]
\end{theorem}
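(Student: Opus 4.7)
The plan is to identify the derivative $L(\rho_K) := \frac{d}{dt}\vol{C_\delta \bKt}|_{t=0}$ as a continuous linear functional of the radial perturbation $\rho_K$, and then force it to vanish using the rotational and scaling symmetries of the Euclidean ball.

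Since $C_\delta$ is equivariant under dilations and $\vol{\bKt}=1$, I first write $\vol{C_\delta \bKt} = \vol{C_\delta K_t}/\vol{K_t}$. The denominator $\vol{K_t} = \tfrac{1}{n}\int_{S^{n-1}}(1+t\rho_K(v))^n dv$ is smooth in $t$, with derivative $\int_{S^{n-1}} \rho_K\, dv$ at $t=0$. For the numerator, the $C^1$ hypothesis on $\rho_K$ makes the covariogram $g_{K_t}(x) = \int \mathbf{1}_{K_t}(y) \mathbf{1}_{K_t}(y-x) dy$ jointly $C^1$ in $(t,x)$. At $t=0$ one has $C_\delta \B = r_\delta \B$ for some $r_\delta \in (0,2)$ determined by $g_\B(r_\delta e_1) = \delta \vol{\B}$, and the strict radial decrease of $g_\B$ gives $|\nabla_x g_\B| \equiv |g_\B'(r_\delta)| > 0$ on $\partial(r_\delta \B)$. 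Thus, by the implicit function theorem, $\partial C_\delta K_t$ is a $C^1$ deformation of $r_\delta S^{n-1}$ for small $|t|$, and the standard level-set variation formula yields both the $C^1$ regularity of $t \mapsto \vol{C_\delta K_t}$ and an explicit expression for its derivative. The crucial point is that this expression is \emph{linear} in $\rho_K$: since $\partial_t \mathbf{1}_{K_t}|_{t=0} = \rho_K\, d\sigma_{S^{n-1}}$ in the distributional sense and $g_{K_t}$ is bilinear in $\mathbf{1}_{K_t}$, one finds
\[\partial_t g_{K_t}(x)|_{t=0} = \int_{S^{n-1}} \rho_K(v)\bigl(\mathbf{1}_\B(v-x) + \mathbf{1}_\B(v+x)\bigr) d\sigma(v),\]
and combining with the derivative of $\vol{K_t}$ shows that $L$ is a continuous linear functional on $C^1(S^{n-1})$.

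The functional $K \mapsto \vol{C_\delta \bar K}$ is invariant under $O(n)$, and the base point $\B$ is fixed by $O(n)$, so $L(\rho_K \circ R) = L(\rho_K)$ for every $R \in SO(n)$. Since $SO(n)$ acts transitively on $S^{n-1}$, any continuous $SO(n)$-invariant linear functional on $C^1(S^{n-1})$ must be a scalar multiple of spherical integration, so $L(\rho) = c\int_{S^{n-1}} \rho\, dv$ for some constant $c$. Taking $\rho_K \equiv 1$ then gives $K_t = (1+t)\B$ and hence $\bKt = \B/\vol{\B}^{1/n}$ independent of $t$, so $L(1) = 0$ and thus $c = 0$, proving $L \equiv 0$.

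The main obstacle is the analytic setup: verifying joint $C^1$ dependence of $g_{K_t}(x)$ on $(t,x)$ together with uniform non-degeneracy of $|\nabla_x g_{K_t}|$ on a neighborhood of $\partial C_\delta K_t$ for $t$ near $0$, so that the level-set variation formula genuinely applies. Once this regularity is secured, the symmetry-plus-scaling argument gives the vanishing of the first variation without any further computation.
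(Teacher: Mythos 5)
Your argument is correct, and although it rests on the same analytic skeleton as the paper --- joint $C^1$ regularity of $(t,x)\mapsto g_{K_t}(x)$, non-vanishing of the radial derivative of $g_{\B}$ on the level set so that the implicit function theorem applies, and the first variation $\partial_t g_{K_t}(x)\big|_{t=0}=\int_{S(x)}\rho_K(v)\,dv$ (your indicator formula is exactly the paper's $W_{K,v}$) --- the way you force the derivative to vanish is genuinely different. The paper pushes the computation to the end: it integrates the explicit expression for $\rho_v'(0)$ over the sphere, uses a Fubini argument on the spherical caps to get $\int_{S^{n-1}}W_{K,v}(s_0)\,dv=S(s_0)I_K$, arrives at $I_K\,L'(s_0)^{-1}\omega_n^{-1/n}\rho_0^{n-1}\bigl(nL(s_0)-s_0L'(s_0)-S(s_0)\bigr)$, and finally verifies the identity $nL(s_0)=s_0L'(s_0)+S(s_0)$ by a cone-volume computation. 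You instead note that the first variation is a rotation-invariant linear functional of $\rho_K$, hence $c\int_{S^{n-1}}\rho_K$, and that $c=0$ because $\rho_K\equiv 1$ yields the constant family $\bKt=\B/\omega_n^{1/n}$. This is cleaner: it replaces both the cap-symmetry Fubini step and the geometric identity by abstract symmetry plus the trivial dilation test, and in fact re-derives the paper's identity $nL=sL'+S$ as a corollary. What the paper's longer route buys is the pointwise formula for $\rho_v'(0)$, which is needed again (squared, not merely integrated) in the second-order analysis of $\vol{C_\delta\bKt}$; your symmetry argument only identifies $\int_{S^{n-1}}\rho_v'(0)\,dv$. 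Two small points of care: the classification of invariant functionals is most economically justified here by observing that your $L$ is represented by integration against a bounded kernel, so it is an $SO(n)$-invariant measure and hence a multiple of $\sigma$ (rather than invoking a statement about the full dual of $C^1(S^{n-1})$); and the regularity package you flag as ``the main obstacle'' is precisely the content of the paper's Proposition \ref{res_gk_smooth}, so it does need the transversality argument given there and not just the strict monotonicity of $g_\B$ at $t=0$.
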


Then it suffices to analyze the second derivative of $t \mapsto \vol{C_{\delta} \bKt}$.
In the limit $\delta \to 1^-$ this second derivative is completely described in Section \ref{sec_limit} for $n=2$, and its sign is compatible with the fact that $t \mapsto \vol{\Pi^*\bKt}$ has a maximum at $t=0$.
In Section \ref{sec_limit} we show:
\begin{theorem}
	\label{res_positive_almost}
	For every $C^2$ smooth radial set $K \subseteq \R^2$ the function $t \mapsto \vol{C_{\delta} \bKt}$ is $C^2$ for every $\delta \in (0,1)$ and
	\begin{align}
		\lim_{\delta \to 1^-} \frac 1{(1-\delta)^2}  \frac{\partial^2}{\partial t^2} \vol[2]{C_\delta \bKt} \bigg|_{t=0}
		%&=\frac 34 \pi \int_{S^1} \left(\frac{\rho_K(v) + \rho_K(v+\pi)}2\right)^2 d v -\frac 12 \left( \int_{S^1} \rho_K(v) d v\right)^2 \\
		%&-\frac \pi4 \int_{S^1}\rho_K'(v)^2 d v + \frac \pi4 \int_{S^1} \rho_K(v)^2 d v\\
		&\leq 0.
	\end{align}
	Equality holds if and only if $\rho_K$ is the restriction of a polynomial of degree $2$ to the unit circle.
	%\[\rho_K(\alpha) = a + b \cos(\alpha) + c \sin(\alpha) + d \cos(2\alpha) + e \sin(2\alpha)\]

	%for some constants $a,b,c,d,e$.
\end{theorem}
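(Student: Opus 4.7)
The strategy is to exchange the limit $\delta \to 1^-$ with the second derivative $\partial_t^2|_{t=0}$ and reduce the statement to the Petty projection inequality along the curve $\bKt$. Since $K$ is $C^2$, for $t$ near $0$ the body $\bKt$ has a $C^2$ boundary and its covariogram $g_{\bKt}(x)$ is smooth jointly in $(t,x)$ for $x$ away from the origin. For $\delta \in (0,1)$ the point $\rho_{C_\delta \bKt}(v) v$ is bounded away from $0$, so the equation $g_{\bKt}(rv)=\delta$ (recall $\vol[2]{\bKt}=1$) defines $\rho_{C_\delta \bKt}(v)$ as a $C^2$ function of $t$ via the implicit function theorem, and hence $t \mapsto \vol[2]{C_\delta \bKt} = \tfrac{1}{2}\int_{S^1}\rho_{C_\delta \bKt}(v)^2\, dv$ is $C^2$.

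I would next establish a refinement of \eqref{eq_schmuck_limit} uniform in $t$ and surviving two $t$-derivatives. The standard expansion $g_{\bKt}(\epsilon v) = 1 - \epsilon \vol[1]{P_{v^\perp}\bKt} + O(\epsilon^2)$, valid uniformly for $v \in S^1$ and $t$ small together with its first two $t$-derivatives, combined with implicit differentiation, yields
\[ \rho_{C_\delta \bKt}(v) = (1-\delta)\rho_{\Pi^*\bKt}(v) + O\bigl((1-\delta)^2\bigr) \]
with a remainder that is $C^2$ in $t$, with bounds uniform in $\delta$. Squaring, integrating over $S^1$, dividing by $(1-\delta)^2$, and then applying $\partial_t^2|_{t=0}$ and letting $\delta \to 1^-$ gives
\[ \lim_{\delta\to 1^-}\frac{1}{(1-\delta)^2}\frac{\partial^2}{\partial t^2}\vol[2]{C_\delta \bKt}\bigg|_{t=0} \;=\; \frac{\partial^2}{\partial t^2}\vol[2]{\Pi^*\bKt}\bigg|_{t=0}. \]

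Since $\vol[2]{\bKt}=1$ is constant in $t$ and $\bar K_0$ is a Euclidean disc, the Petty inequality \eqref{ineq-petty-projection} shows $t\mapsto \vol[2]{\Pi^*\bKt}$ is maximised at $t=0$, so the right-hand side is $\leq 0$. For the equality case, write $\rho_K(\theta) = a_0 + \sum_{k\geq 1}(a_k \cos k\theta + b_k \sin k\theta)$; the constant $a_0$ is absorbed by the volume normalisation of $\bKt$. A short Lagrange-multiplier computation at the Petty maximum shows that $\frac{\partial^2}{\partial t^2}\vol[2]{\Pi^*\bKt}|_{t=0}$ depends only on the first-order perturbation $\partial_t \bKt|_{t=0}$, as a quadratic form, and rotational symmetry of $\vol[2]{\Pi^*\cdot}$ around the disc makes this form block-diagonal with scalar blocks in the Fourier basis. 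The modes $k=1,2$ lie in the kernel: together they span the $4$-dimensional tangent space to the family of unit-area ellipses at $\bar K_0$ (translations for $k=1$, centred ellipse deformations for $k=2$), along which $\vol[2]{\Pi^*\cdot}$ is constant by Petty's equality case. The modes $k\geq 3$ each contribute strictly negatively, as a direct computation verifies. Consequently equality holds iff $a_k=b_k=0$ for all $k\geq 3$, i.e.\ $\rho_K$ is a trigonometric polynomial of degree $\leq 2$; using $\cos^2\theta=(1+\cos 2\theta)/2$ and $\sin\theta\cos\theta=(\sin 2\theta)/2$, these are precisely the restrictions of polynomials of degree $2$ on $\R^2$ to the unit circle.

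The main obstacle is the refined uniform expansion of $\rho_{C_\delta \bKt}$ with $C^2_t$ control of the $O((1-\delta)^2)$ remainder: the function $g_K(x)$ is only Lipschitz at $x=0$, so exchanging $\lim_{\delta \to 1^-}$ with $\partial_t^2|_{t=0}$ requires a careful joint Taylor expansion of $g_{\bKt}(rv)$ in $r$ and $t$ combined with implicit function estimates. A secondary technical point is the strict negativity of the reduced Hessian on Fourier modes $k\geq 3$; this is a local infinitesimal form of the Petty inequality at the disc and must be checked by explicit computation.
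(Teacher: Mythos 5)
Your route is genuinely different from the paper's, and its central identity is in fact correct: the paper never interchanges the $\delta$-limit with $\partial_t^2$; instead it computes $\frac{\partial^2}{\partial t^2}\vol[2]{C_\delta\bKt}\big|_{t=0}$ \emph{exactly} for every fixed $\delta$ (Propositions \ref{res_d2_g} and \ref{res_d2_vol}, via a second-order expansion of the covariogram in $t$ at fixed $x$, including the corner terms $T_K(x)$ at the intersection points of the two boundary circles), and only then lets $\delta\to1^-$ in the closed formula (Proposition \ref{res_limit}). The resulting limit does coincide with $\frac{\partial^2}{\partial t^2}\vol[2]{\Pi^*\bKt}\big|_{t=0}$: using $\vol[2]{\Pi^*\bKt}=\vol[2]{K_t}\,\vol[2]{\Pi^*K_t}$ and $h_{K_t}(u)=1+t\rho_K(u)+\tfrac{t^2}{2}\rho_K'(u)^2+O(t^3)$ one recovers exactly the right-hand side of \eqref{eq_limit}. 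So your reduction to the second variation of the Petty functional is consistent with the paper and, if completed, more conceptual.

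Two steps, however, are genuine gaps rather than routine verifications. First, the interchange itself: you need the remainder in $\rho_{C_\delta\bKt}(v)=(1-\delta)\rho_{\Pi^*\bKt}(v)+O((1-\delta)^2)$ to remain $O((1-\delta)^2)$ \emph{after two $t$-derivatives, uniformly in $\delta$ near $1$}. This is precisely the kind of uniformity that is fragile here: Theorem \ref{res_cos_m_perturbation} shows that $\frac{1}{(1-\delta)^2}\partial_t^2\vol[2]{C_\delta\bKt}\big|_{t=0}$ takes positive values at $\delta$ arbitrarily close to $1$ for suitable $K$, so the rate of convergence degenerates with $K$ and any remainder bound must be quantified in terms of the $C^2$ data of $\rho_K$. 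You name this as ``the main obstacle'' but do not resolve it; the paper's architecture is designed to avoid it. Second, the equality case: Petty's inequality gives that $t\mapsto\vol[2]{\Pi^*\bKt}$ is maximized at $t=0$, hence the Hessian is $\leq 0$, but it does \emph{not} identify the kernel of that Hessian with the tangent space to the family of ellipses --- a degenerate direction of a Hessian at a maximum need not be tangent to the set of maximizers. Strict negativity on the Fourier modes $k\geq3$ must be computed, and once you do so you are carrying out essentially the computation of Proposition \ref{res_limit_negative} (the form is a positive multiple of $\sum_{n\geq1}(1+3\varepsilon_n-n^2)|a_n|^2$). In short: the reduction to Petty buys the sign of the inequality cheaply, but the interchange of limits and the rigidity statement --- the two hard points --- are deferred rather than proved.
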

The equality cases of Theorem \ref{res_positive_almost} correspond to variations $\bKt$ that coincide up to first order with families of ellipsoids.

At this point it is natural to expect that Theorem \ref{res_positive_almost} combined with an approximation argument and Proposition \ref{res_far_from_ball}, could yield a positive answer to Question \ref{thequestion}.
However, for this argument to be complete we need the convergence of the second derivatives of the volume as $\delta \to 1^-$, to be uniform with respect to $K$.
We were unable to show this uniform convergence, and the following counterexample shows why:

\begin{theorem}
	\label{res_cos_m_perturbation}
	Let $K^m \subseteq \R^2$ be the (symmetric) radial set defined by $\rho_{K^m}(v) = \cos(2 m v)^2$ with $v \in [0,2\pi]$.
	Then for every $\delta \in (0,1)$ there exists $m \in \mathbb N$ such that 
	\[  \frac {\partial^2 }{\partial t^2}\vol[2]{C_{\delta} \overline{K_t^m}} \bigg|_{t=0} > 0.\]
\end{theorem}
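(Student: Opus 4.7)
The plan is to use the explicit formula for
\[I_\delta(K) := \frac{\partial^2}{\partial t^2}\vol[2]{C_{\delta}\bKt}\Big|_{t=0}\]
that must be derived in the proof of Theorem \ref{res_positive_almost}, valid for any fixed $\delta\in(0,1)$ (not merely in the limit $\delta\to 1^-$), and evaluate it on the family $\rho_{K^m}(v)=\cos(2mv)^2$.

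I would first reduce to a single Fourier mode. Writing $\cos(2mv)^2=\tfrac12+\tfrac12\cos(4mv)$, the constant summand $\tfrac12$ is the radial function of an infinitesimal rescaling of $K_0=\B$; such a rescaling is exactly cancelled by the normalization $\bKt=K_t/\vol{K_t}^{1/2}$, so it contributes nothing to $I_\delta$. Since $\B$ is rotationally symmetric, the quadratic form $\rho_K\mapsto I_\delta(K)$ on $L^2(S^1)$ commutes with rotations and is therefore diagonal in the Fourier basis; in particular the cross term between the constant mode and $\cos(4mv)$ vanishes, giving $I_\delta(K^m)=\tfrac14\,\lambda_\delta(4m)$, where $\lambda_\delta(k)$ denotes the diagonal entry attached to the mode $\cos(kv)$. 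The task becomes: for each $\delta\in(0,1)$, show $\lambda_\delta(k)>0$ for $k=4m$ with $m$ large enough.

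Next I would unpack the $k$-dependence of $\lambda_\delta(k)$. The second variation formula for $I_\delta$ should express it as a finite sum of integrals over $S^1$ of products of $\rho_K$ and its derivatives, weighted by one-dimensional kernels depending only on $\delta$ and the radial variable along $C_\delta \B$ (angular dependence being excluded by rotational invariance of $\B$). For $\rho_K(v)=\cos(kv)$ every derivative brings down a factor of $k$, and each individual integral becomes an explicit constant times a power of $k$. Collecting the contributions, one obtains an expression of the form
\[\lambda_\delta(k)=\alpha(\delta)+\beta(\delta)\,k^2+\gamma(\delta)\,k^4,\]
the cutoff $k^4$ reflecting that the second variation of $\vol[2]{C_\delta \bKt}$ involves derivatives of $\rho_K$ of order at most two. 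For any $\delta\in(0,1)$ with $\gamma(\delta)>0$, $\lambda_\delta(k)$ is eventually positive in $k$, and one concludes by choosing $m>k_0(\delta)/4$, where $k_0(\delta)$ is the largest positive root of $\lambda_\delta(\,\cdot\,)$.

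The main obstacle is to verify $\gamma(\delta)>0$ for every $\delta\in(0,1)$. This is the quantitative face of the non-uniformity in Theorem \ref{res_positive_almost}: that theorem forces $(1-\delta)^{-2}\lambda_\delta(k)\to -c(k)\leq 0$ for every fixed $k$ as $\delta\to 1^-$, but it does not rule out that $\lambda_\delta(k)$ itself become positive once $k$ grows faster than $(1-\delta)^{-1}$. Checking $\gamma(\delta)>0$ amounts to isolating from the formula produced in the proof of Theorem \ref{res_positive_almost} the precise coefficient attached to $(\rho_K'')^2$ and verifying that the resulting one-dimensional integral in the radial variable of $C_\delta \B$ is strictly positive on all of $(0,1)$; this sign verification is the computational core of the proof, and it is what the low-order cancellations that yield non-positivity in the $\delta\to 1^-$ limit deliberately obscure.
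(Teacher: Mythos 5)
Your reduction to a single Fourier mode is sound (rotation invariance of $\B$ does diagonalize the second-variation quadratic form, and the constant mode is killed by the volume normalization), and this matches the spirit of the paper, which evaluates the explicit second-variation formula of Proposition \ref{res_d2_vol} on $\rho_{K^m}$. But the structural ansatz you then impose, $\lambda_\delta(k)=\alpha(\delta)+\beta(\delta)k^2+\gamma(\delta)k^4$, is wrong, and the gap is fatal to your concluding step. The second-variation formula \eqref{eq_d2_vol} contains no derivatives of $\rho_K$ at all: its ingredients are $\rho_K$ evaluated at the shifted angles $v\pm\alpha$, $v+\pi\pm\alpha$ and the arc integrals $w_{K,v}(\alpha)$. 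On the mode $\cos(kv)$ a shift by $\alpha$ produces factors $\cos(k\alpha)$, $\sin(k\alpha)$, and the arc integrals produce factors $\sin(k\alpha)/k$; consequently $\lambda_\delta(k)$ is a \emph{bounded, oscillating} function of $k$, of the form (up to the prefactor $\sin(\alpha)^{-2}$) $\tfrac12\cos(2\alpha)+\tfrac12\cos(2k\alpha)+O(1/k)$, with $\alpha$ determined by $\delta$ via \eqref{eq_relations_alpha_delta_s}. There is no $k^4$ (or even $k^2$) growth, so "eventually positive in $k$" is false: for fixed $\delta$ the sign of $\lambda_\delta(k)$ keeps changing as $k$ grows, which is exactly the complicated union of intervals the paper warns about.

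The missing idea is a Diophantine/density argument in place of your root-counting step. Since $\cos(2\alpha)>-1$ strictly for $\alpha\in(0,\pi/2)$, one needs $\cos(2k\alpha)$ close to $1$ to beat it; this is arranged by choosing $k$ with $k\alpha/\pi$ an integer when $\alpha/\pi$ is rational, and by equidistribution of $\{\cos(2k\alpha)\}_k$ in $[-1,1]$ when $\alpha/\pi$ is irrational. Without this selection of $m$ along a suitable subsequence, the proof does not close. Your proposed verification that $\gamma(\delta)>0$ is moot because no such coefficient exists, and the heuristic that the second variation "involves derivatives of $\rho_K$ of order at most two" confuses the regularity hypothesis on $K$ with the structure of the resulting formula.
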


As a consequence, we get a negative answer to Question \ref{thequestion} in dimension $2$, and every value of $\delta \in (0,1)$.
\begin{theorem}
	\label{res_negative_answer}
	For every $\delta \in (0,1)$ there exists a symmetric convex body $K \subseteq \R^2$ such that $\vol{C_{\delta} K} > \vol{C_{\delta} B_K}$.
	Moreover, $K$ can be chosen arbitrarily close to the Euclidean ball in the $C^\infty$ topology.
\end{theorem}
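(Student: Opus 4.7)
The plan is to combine Theorems \ref{res_first_variation} and \ref{res_cos_m_perturbation} via a second-order Taylor expansion. Fix $\delta \in (0,1)$ and use Theorem \ref{res_cos_m_perturbation} to choose $m \in \mathbb{N}$ with
\[c := \frac{\partial^2}{\partial t^2} \vol[2]{C_\delta \overline{K_t^m}}\bigg|_{t=0} > 0.\]
Since $\rho_{K^m}(v) = \cos(2m v)^2$ is $C^\infty$, Theorem \ref{res_first_variation} gives that the first $t$-derivative vanishes at $t=0$, so a Taylor expansion yields
\[\vol[2]{C_\delta \overline{K_t^m}} = \vol[2]{C_\delta \overline{K_0^m}} + \tfrac12 c\, t^2 + o(t^2),\]
which strictly exceeds $\vol[2]{C_\delta \overline{K_0^m}}$ for all sufficiently small $t > 0$.

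The key identification is $\overline{K_0^m} = B_{\overline{K_t^m}}$ for every $t$: indeed $\rho_{K_0^m} \equiv 1$ gives $K_0^m = \B$, so $\overline{K_0^m}$ is the Euclidean ball of volume $1$; by \eqref{eq_def_Ktbar} the body $\overline{K_t^m}$ also has volume $1$ for every $t$, and hence $B_{\overline{K_t^m}}$ is the same ball. Setting $K := \overline{K_t^m}$ for such small $t > 0$ therefore yields $\vol[2]{C_\delta K} > \vol[2]{C_\delta B_K}$.

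It remains to check the regularity claims. Symmetry of $K$ is immediate from the $\pi$-periodicity of $\cos(2 m v)^2$, inherited by $\rho_{K_t^m}$ and $\rho_{\overline{K_t^m}}$. Convexity holds for $t$ sufficiently small because $K_t^m \to \B$ in $C^\infty$ as $t \to 0$ and convexity is an open condition on $C^2$-perturbations of a body with strictly positive curvature. For $C^\infty$-proximity to $\B$, fix any $k$ and any $\varepsilon > 0$: with $m$ now fixed, the $C^k$-norm of $t \cos(2 m \cdot)^2$ is $O(t)$, and since the inequality $\vol[2]{C_\delta K} > \vol[2]{C_\delta B_K}$ is scale-invariant a harmless dilation places $K$ inside the prescribed $C^\infty$-neighborhood of $\B$.

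The main point of care is the order of quantifiers: $m$ depends on $\delta$ and the $o(t^2)$ remainder depends on $m$, so $t$ must be chosen last and no uniform $t$ works across all $m$. This failure of uniformity in $m$ (equivalently, in the frequency of the perturbation) is precisely what prevents the local strategy suggested after Theorem \ref{res_positive_almost} from settling Question \ref{thequestion} positively, and it is what Theorem \ref{res_cos_m_perturbation} exploits to produce the counterexample.
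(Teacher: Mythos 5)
Your proposal is correct and follows essentially the same route as the paper: fix $\delta$, pick $m$ via Theorem \ref{res_cos_m_perturbation}, use the vanishing first derivative from Theorem \ref{res_first_variation} together with the $C^2$ dependence on $t$ to conclude that $\vol[2]{C_\delta \overline{K_t^m}}$ strictly exceeds $\vol[2]{C_\delta \overline{\B}}$ for small $t>0$, and secure convexity and symmetry exactly as the paper does. Your explicit verification that $B_{\overline{K_t^m}}$ is the unit-volume ball, and your remark on the order of quantifiers ($t$ chosen after $m$), are welcome clarifications but do not change the argument.
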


It is important to remark that for a fixed $m$ in Theorem \ref{res_cos_m_perturbation}, the set of $\delta \in (0,1)$ for which $  \frac {\partial^2 }{\partial t^2}\vol[2]{C_{\delta} \bKt} \bigg|_{t=0}$ is positive, is a complicated union of intervals that grow in number and accumulate near $1$, as $m \to \infty$ (see Section \ref{sec_second_order}). 
Previous attempts to find regular polygons that are counterexamples to Question \ref{thequestion} for $\delta$ close to $1$ by direct computation, failed probably because of this complicated behaviour.
We still do not know if regular polygons are counterexamples to Question \ref{thequestion}.

The following natural question remains open:
\begin{question}
	For each fixed $\delta \in (0,1)$, what convex bodies are maximizers of $C_\delta K$ when $K$ runs among sets of the same volume?
\end{question}

The rest of the paper is organized as follows:

In Section \ref{sec_notation} we introduce all the notation that will be necessary for our computations in the following sections.
In Section \ref{sec_preliminary} we obtain the results concerning convex sets far from the ball (Proposition \ref{res_far_from_ball}), and establish several technical lemmas that will be needed later.

In Section \ref{sec_first_order} we compute the first-order approximation of $\vol{C_\delta \bKt}$ at $t=0$ (Theorem \ref{res_first_variation}). All results in this section are proved in dimension $n$.

In Section \ref{sec_second_order} we compute the second order approximation in the plane, and establish Theorems \ref{res_cos_m_perturbation} and \ref{res_negative_answer}.

Finally in Section \ref{sec_limit} we compute the limit of the second derivative of $\vol{C_\delta \bKt}$ when $\delta \to 1^-$, and prove Theorem \ref{res_positive_almost}.

\section{Notation}
\label{sec_notation}
The closed Euclidean ball of center $p \in \R^n$ and radius $r>0$ will be denoted by $B(p, r)$.
The closed unit Euclidean ball $B(0,1)$ is denoted by $\B$, and its volume, by $\omega_n$.

%For some computations in Sections \ref{sec_first_order} and \ref{sec_second_order} we will use the $o,O$ notation.
It is convenient to introduce some notation in order to simplify the lengthy computations that we will carry over in Sections \ref{sec_first_order} and \ref{sec_second_order}.
The following notation is by no means standard.

For any set $L$ and $x \in \R^n$, we denote $G_L(x) = L \cap (L+x)$.
For $x \in \R^n$ denote $L(x) = \B \cap (\B+x) = G_{\B}(x)$, $C(x) = S^{n-1} \cap (\B+x)$ and $S(x) = C(x) \cup C(-x)$.

The $n-1$ dimensional volume $S(s) = \vol[n-1]{S(s v)}$ is independent of $v \in S^{n-1}$ for any $s>0$, as well as the $n$-dimensional volume $L(s) = \vol{L(s v)}$.

For a fixed radial set $K$, $v \in S^{n-1}$ and $\delta \in (0,1)$, denote 
\[k(t) = \vol{K_t},\quad \rho_v(t) = \rho_{C_\delta \bKt}(v),\quad s_v(t) = \rho_v(t) k(t)^{1/n} \text{ and } g_v(t,s) = g_{K_t}(s v).\]
These quantities depend on the set $K$ which is not explicitly written in the notation.

The partial derivatives of a function $g(t,s)$ will be denoted by $\partial_s g_v, \partial_t g_v, \partial_{s,t} g_v$ and so on.
We will denote $k_0 = k(0), k_0' = k'(0), k_0'' = k''(0)$.

For $A \subseteq S^{n-1}$ and functions $f,g :S^{n-1} \to \R$ it will be convenient to use:
\[ [f,g]_A = \{t y \in \R^n: y \in A, t \in [f(y), g(y)]\},\]
and for $x\in\R^n$,
\[ [f,g]_A^x = [f,g]_A + x.\]

With this notation we have $K = [0,\rho_K]_{S^{n-1}}$ and $G_K(x) = [0,\rho_K]_{S^{n-1}} \cap [0,\rho_K]_{S^{n-1}}^x$.

The union of two disjoint sets will be denoted by $A \sqcup B$ to emphasize that $A \cap B = \emptyset$.

To measure the parameter of the convolution bodies $C_\delta K$ we will use the three different variables $\delta \in (0,1), s \in (0,2)$ and $\alpha \in (0, \pi/2)$, related by the formulas
\begin{equation}
	\label{eq_relations_alpha_delta_s}
	\delta = L(s), s = 2 \cos(\alpha).
\end{equation}

Our computations will involve the quantities 
\[W_{K,v}(s) = \int_{S(s v)} \rho_K(w) dw,\ \  I_K = \int_{S^1}\rho_K(w)dw.\]
In the variable $\alpha$ we will denote $w_{K,v}(\alpha) = W_{K,v}(2 \cos(\alpha))$.

For the computations in $\R^2$ we will identify points in $S^1$ with their angle in $[0,2\pi)$, and write indistinctly $\rho_K(v)$ for $v \in [0,2\pi)$ or $v \in S^1 \subseteq \R^2$.
We will also use the vector $v_\alpha = (\cos(\alpha), \sin(\alpha))$.

\section{Preliminary results}
\label{sec_preliminary}

We start by proving Proposition \ref{res_far_from_ball} and Theorem \ref{res_weighted_inequality}.
\begin{proof}[Proof of Proposition \ref{res_far_from_ball}]
    According to \cite[Corollary 2]{MS92}, for every convex body $K \subseteq \R^n$
	%\[\vol{\Pi^*K}^{1/n}  \leq \frac {\vol{C_{\delta}K}^{1/n}}{1-\delta} \leq \frac{-\log(\delta)}{1-\delta} \vol{\Pi^*K}^{1/n}. \]
	\[(1-\delta)^n \vol{\Pi^*K}  \leq \vol{C_{\delta}K} \leq (-\log(\delta))^n \vol{\Pi^*K}. \]
	so
    \begin{align}
        \vol{C_{\delta}K}
	    &\leq (-\log(\delta))^n \vol{\Pi^*K} \\
	    &\leq (-\log(\delta))^n \frac{\vol{\Pi^*K}}{\vol{\Pi^*B_K}} \vol{\Pi^*B_K} \\
	    &\leq \left( \frac{-\log(\delta)}{1-\delta} \right)^n \frac{\vol{\Pi^*K}}{\vol{\Pi^*B_K}} \vol{C_{\delta}B_K}. \\
    \end{align}
	Assuming $K$ is not an ellipsoid, $\frac{\vol{\Pi^*K}}{\vol{\Pi^*B_K}}<1$ and we may find an appropriate $\delta_0(K)$ for which $\vol{C_{\delta}K} \leq \vol{C_{\delta}B_K}$ if $\delta > \delta_0(K)$.
	Indeed, by a theorem of Böröczky \cite[Corollary 5]{boroczky2013stronger}, there exists a constant $\gamma_n > 0$ such that
	\[\vol{\Pi^* K} \leq (1 - \gamma_n d_{\operatorname{BM}}(K, \B)^{1680 n}) \vol{\Pi^* B_K}\]
and we get
	\[\vol{C_{\delta}K} \leq \left(\frac{-\log(\delta)}{1-\delta}\right)^n  (1 - \gamma_n d_{\operatorname{BM}}(K, \B)^{1680 n})\vol{C_{\delta}B_K}.\]
	Using that $\frac{-\log(\delta)}{1-\delta} \leq \delta^{-1}$ for $\delta \in (0,1)$, 
	it suffices to take \\ $\delta_0(K) = (1 - \gamma_n d_{\operatorname{BM}}(K, \B)^{1680 n})^{1/n}$ and the function $\varphi(t) = (1 - \gamma_n t^{1680 n})^{1/n}$.
\end{proof}

\begin{proof}[Proof of Theorem \ref{res_weighted_inequality}]
In \cite[Section 2]{kiener1986extremalitat}, Kiener proves that for $p \geq 1$ and any convex body $K$,
\[\int g_K(x)^p dx \leq \int g_{B_K}(x)^p dx.\]

	A quick inspection of the proof (stated also in \cite[Lemma 3]{kiener1986extremalitat} for the equality case) shows that the $p$-th power can be replaced by any convex, non-negative and non-decreasing function $\varphi:[0,1] \to \R^+$, this is,
	\begin{equation}
		\label{ineq_kiener_convex}
		\int \varphi(g_K(x)) dx \leq \int \varphi(g_{B_K}(x)) dx.
	\end{equation}
	Assume without loss of generality that $\omega$ is $C^1$.
	Take $\varphi(t) = \int_0^1 \omega'(\delta) (t - \delta)_+ d \delta$ which is clearly non-negative, convex and non-decreasing.
	Using Fubini, integration by parts and the layer-cake formula,
	\begin{align}
		\int \varphi(g_K(x)) d x
		&= \int_0^1 \omega'(\delta) \int_{\R^n} (g_K(x) - \delta)_+ d x d\delta \\ 
		&= \int_0^1 \omega'(\delta) \int_\delta^1 \vol{C_s K} d s d\delta \\ 
		&= \omega'(0) \int_{\R^n} g_K(x) d x + \int_0^1 \omega(\delta) \vol{C_\delta K} d\delta. \\ 
		&= \omega'(0) \vol{K}^2 + \int_0^1 \omega(\delta) \vol{C_\delta K} d\delta. \\ 
	\end{align}
	By \eqref{ineq_kiener_convex} we get the result.
\end{proof}

The following technical proposition is essential to estimate $g_K(x)$ for small $x$.
Set
\[L(K, x) = L(x) \sqcup [1,\rho_{K}]_{C(x)} \sqcup [1,\rho_{K}]_{C(-x)}^x.\]
The sets $G_{K_t}(x)$ and $L(K_t,x)$ are very similar when $t>0$ is small, in fact they coincide outside a small region of volume $O(t^2)$, while the volume of $L(K_t, x)$ is easier to compute.

\begin{proposition}
	\label{res_serious_approximation}
	For $M>0$, $x\in\R^n, |x|<2$ there exist $c,t_0>0$ depending only on $M$ and $|x|$, such that for every radial set $K \subseteq \R^n$ with $\rho_K \leq M$ and for every $t \in (0, t_0)$,
	\[G_{K_t}(x) \setminus T(c t) = L(K_t, x) \setminus T(c t)\]
	where 
	\[T(t) = \{y \in \R^n: d(y, S^{n-1}\cap (S^{n-1}+x)) \leq t\}\]
	and $K_t$ is defined by \eqref{eq_def_Kt}.
\end{proposition}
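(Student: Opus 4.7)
The plan is to compare $G_{K_t}(x)$ and $L(K_t,x)$ pointwise, splitting on the signs of
\[f(y):=1-|y|, \qquad g(y):=1-|y-x|,\]
and using a quantitative transversality estimate to localize the disagreement of the two sets to a neighborhood of $E:=S^{n-1}\cap(S^{n-1}+x)$. The key geometric fact is that for $0 < |x| < 2$, the gradients $\nabla f(z) = -z$ and $\nabla g(z) = -(z-x)$ are linearly independent at every $z \in E$: dependence forces $z$ parallel to $x$, which combined with $|z|=|z-x|=1$ yields $|x| \in \{0,2\}$. Since $E$ is compact, this upgrades to a uniform estimate producing $C_0, \eta_0 > 0$ depending only on $|x|$ with
\[\max(|f(y)|,|g(y)|) \leq \eta_0 \Longrightarrow d(y,E) \leq C_0 \max(|f(y)|,|g(y)|).\]
Since $0 \leq \rho_K \leq M$, we also have $\B \subseteq K_t \subseteq (1+tM)\B$, so any $y \in K_t \cap (K_t+x)$ satisfies $|f(y)|, |g(y)| \leq tM$ whenever these quantities are negative.

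For the inclusion $L(K_t,x) \setminus T(ct) \subseteq G_{K_t}(x)$, the piece $L(x) \subseteq G_\B(x) \subseteq G_{K_t}(x)$ comes for free. For $y = sv \in [1,\rho_{K_t}]_{C(x)}$ one has $y \in K_t$ by definition, and if $y \notin K_t+x$ then $|y-x| > 1$, while $|y-x| \leq |v-x| + (s-1) \leq 1+tM$ since $v \in C(x)$; hence both $|f(y)|, |g(y)| \leq tM$ and transversality locates $y \in T(C_0 Mt)$. The translated piece $[1,\rho_{K_t}]_{C(-x)}^x$ is symmetric. For the reverse inclusion $G_{K_t}(x) \setminus T(ct) \subseteq L(K_t,x)$, split $y$ by the signs of $(f(y), g(y))$: the case $(+,+)$ gives $y \in L(x)$; in the case $(-,+)$, write $y = sv$ with $s \in (1,1+tM]$ and observe that either $v \in C(x)$ (so $y$ lies in the first shell of $L(K_t,x)$) or $|v-x| > 1$, in which case the reverse triangle inequality forces $|y-x| > 1 - tM$ and transversality applies; the case $(+,-)$ is symmetric; the case $(-,-)$ is immediate since $|f(y)|, |g(y)| \leq tM$.

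Choosing $c = C_0(|x|) M$ and $t_0 \leq \eta_0(|x|)/M$ completes the argument. The main obstacle is the transversality estimate: extracting a constant $C_0$ depending uniformly only on $|x|$, independent of $K$ and of the direction of $x$ (the latter by rotational symmetry), requires a small compactness argument along $E$. The remaining steps are elementary radial-coordinate manipulations.
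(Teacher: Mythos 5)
Your argument is correct, but it takes a genuinely different route from the paper's. The paper proceeds by containments of explicit regions: it introduces the cylinder $U=\{y: b(y)\le \sin\alpha\}$ around the axis of $x$ (with $|x|=2\cos\alpha$ and $b$ the distance to that axis), proves that $U\cap B(0,1+Mt)\cap B(x,1+Mt)\subseteq B(0,1)\cup B(x,1)$, observes that inside this last set the membership conditions for $G_{K_t}(x)$ and $L(K_t,x)$ literally coincide, and finally pushes the complement $B(0,1+Mt)\cap B(x,1+Mt)\setminus U$ into the tube $T$. You instead compare the two sets pointwise through the signs of $f(y)=1-|y|$ and $g(y)=1-|y-x|$, and funnel every disagreement into a single quantitative transversality estimate $d(y,E)\le C_0\max(|f(y)|,|g(y)|)$ near $E=S^{n-1}\cap(S^{n-1}+x)$; your four sign cases are complete and the radial-coordinate estimates in each are right. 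Two observations. First, your transversality lemma needs no compactness argument: subtracting the relations $|y|=1+u$ and $|y-x|=1+w$ determines the component of $y$ along $x$ up to an error linear in $\max(|u|,|w|)$, and the radial component then follows, with an explicit constant $C_0$ of order $\max\bigl(|x|^{-1},(4-|x|^2)^{-1/2}\bigr)$; writing those two lines out would make your proof fully self-contained and arguably shorter than the paper's. Second, your route is sharper exactly where it matters: it yields a tube of radius $ct$, linear in $t$, which is what the $O(t^2)$ volume bounds in Propositions \ref{res_d1_g} and \ref{res_d2_g} require, whereas the paper's final display bounds the \emph{squared} distance to $E$ by $2(2M+M^2)t$ and therefore, read against the definition of $T$, only places the exceptional set in a tube of radius $O(\sqrt t)$; that step of the paper needs the same refinement you have built in (use $b(y)\ge\sin\alpha$ together with \eqref{eq_serious_1} to get $a(y)=O(t)$ and $b(y)-\sin\alpha=O(t)$ directly, rather than $a(y)^2=O(t)$).
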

\begin{proof}
	Let $E$ be the line parallel to $x$ passing through the origin, and $P$ the hyperplane perpendicular to $x$, passing through $x/2$.
	Denote by $a(y), b(y)$ the euclidean distances from $y$ to $P$ and $E$, respectively.
	Since $|x|<2$, we have $|x| = 2 \cos(\alpha)$ for a unique $\alpha \in (0,\pi/2)$.
	Consider the set
	\[U = \{y \in \R^n/ a(y) \leq \sin(\alpha)\}.\]
	It is clear that $U \subseteq L(x) \sqcup [0,\infty]_{C(x)} \sqcup [0,\infty]_{C(-x)}^x$.

	Now we claim that if $t \in \left(0,\frac{\sqrt{1+ 8 \cos(\alpha)^2}-1}M \right)$, then 
	\begin{equation}
		\label{eq_serious_claim}
		U \cap B(0, 1+M t) \cap B(x, 1+M t) \subseteq B(0, 1) \cup B(x, 1)
	\end{equation}
	(see Figure \ref{fig_serious})

	\begin{figure}
		\centering
		\caption{Set in the left-hand side of equation \eqref{eq_serious_claim}}
		\label{fig_serious}
		\includegraphics[width=.5\textwidth]{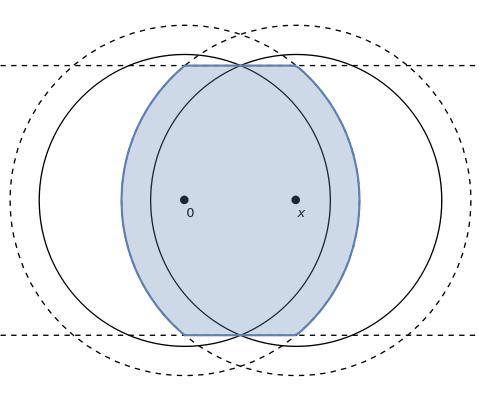}
	\end{figure}
	Indeed, the equations defining the left intersection are
	\begin{equation}
		\label{eq_serious_1}
		(a(y) + \cos(\alpha))^2 + b(y)^2 \leq (1+ M t)^2 ,
	\end{equation}
	\begin{equation}
		\label{eq_serious_2}
		b(y) \leq \sin(\alpha).
	\end{equation}
	If $a(y) \leq 2 \cos(\alpha)$, then $(a(y) - \cos(\alpha))^2\leq \cos(\alpha)^2$ and we get from \eqref{eq_serious_2}, 
	\[(a(y) - \cos(\alpha))^2+b(y)^2 \leq 1.\]
	If $a(y) \geq 2 \cos(\alpha)$, we also get
	\begin{align}
		(a(y) - \cos(\alpha))^2+b(y)^2
		&=(a(y) + \cos(\alpha))^2 + b(y)^2 -4 a(y) \cos(\alpha) \\
		&\leq (1+Mt)^2 -4 a(y) \cos(\alpha)\\
		&\leq 1+ 8 \cos(\alpha)^2-4 a(y) \cos(\alpha) \\
		&< 1,
	\end{align}
	and this implies in both cases that 
	\[y \in B(0, 1) \cup B(x, 1) \]
	and the claim is proven.

	Notice that
	\[B(0, 1) \cup B(x, 1) = L(x) \sqcup ([0,1]_{S^{n-1}}^x \cap [1,\infty]_{S^{n-1}}) \cap ([0,1]_{S^{n-1}} \cap [1,\infty]_{S^{n-1}}^x).\]

	Now, since both $G_{K_t}(x), L(K_t, x)$ lie inside $B(0, 1+M t) \cap B(x, 1+M t)$,
	a point in either of the sets $U \cap G_{K_t}(x), U \cap L(K_t, x)$ must belong to
	\begin{align}
		(B(0, 1) & \cup B(x, 1)) \cap (L(x) \sqcup [0,\infty]_{C(x)} \sqcup [0,\infty]_{C(-x)}^x)\\
			&= L(x) \sqcup ([0,1]_{S^{n-1}}^x \cap [1,\infty]_{C(x)}) \sqcup ([0,1]_{S^{n-1}} \cap [1,\infty]_{C(-x)}^x) 
	\end{align}

	Inside this set, it is clear that the conditions defining $G_{K_t}(x)$ and $L(K_t, x)$ coincide.
	To see this write $G_{K_t}(x) = [0,\rho_{K_t}]_{S^{n-1}} \cap [0,\rho_{K_t}]_{S^{n-1}}^x$ and
	\begin{align}
		G_{K_t}(x) & \cap \left( L(x) \sqcup ([0,1]_{S^{n-1}}^x \cap [1,\infty]_{C(x)}) \sqcup ([0,1]_{S^{n-1}} \cap [1,\infty]_{C(-x)}^x) \right) \\
		&= L(x) \sqcup ([0,1]_{S^{n-1}}^x \cap [1,\rho_{K_t}]_{C(x)}) \sqcup ([0,1]_{S^{n-1}} \cap [1,\rho_{K_t}]_{C(-x)}^x)  \\
		L(K_t, x) & \cap \left( L(x) \sqcup ([0,1]_{S^{n-1}}^x \cap [1,\infty]_{C(x)}) \sqcup ([0,1]_{S^{n-1}} \cap [1,\infty]_{C(-x)}^x) \right).
	\end{align}

	Then, we only need to prove that
		\[B(0, 1+M t) \cap B(x, 1+M t) \setminus U \subseteq T(c t).\]

	The equations defining the left-hand side, are \eqref{eq_serious_1} and
	\begin{equation}
		\label{eq_serious_not_2}
		b(y) \geq \sin(\alpha).
	\end{equation}
	From \eqref{eq_serious_1} we obtain
	\begin{equation}
		\label{eq_serious_3}
		a(y)^2 + \cos(\alpha)^2 + b(y)^2 \leq (1+ M t)^2,
	\end{equation}
	 and using \eqref{eq_serious_not_2} for $t<1$,
	 \begin{equation}
		 \label{eq_serious_4}
		 a(y)^2 \leq (1+M t)^2 - 1 = 2 M t + M^2 t^2 \leq (2M+M^2)t.
	 \end{equation}
	 
	 From \eqref{eq_serious_3} we also have 
	\[b(y)^2 \leq (1+Mt)^2-\cos(\alpha)^2 \leq (2 M + M^2) t +\sin(\alpha)^2,\]
	which yields, together with \eqref{eq_serious_4} and \eqref{eq_serious_not_2},
	 \begin{align}
		 (b(y)-\sin(\alpha))^2 + a(y)^2 
		 &= b(y)^2 -2 b(y) \sin(\alpha) + \sin(\alpha)^2 + a(y)^2 \\
		 &\leq (2M+M^2) t +2 \sin(\alpha)^2 -2 b(y) \sin(\alpha) + (2M+M^2)t\\
		 &\leq 2 (2M+M^2) t
	 \end{align}
	 for $t \in (0,1)$.

	 On the other hand, the equations defining $S^{n-1} \cap (S^{n-1}+x)$ are
	 \[a(y)=0, b(y)=\sin(\alpha),\]
	 and the equations defining $T(c t)$ are
	 \[(b(y)-\sin(\alpha))^2 + a(y)^2 \leq c t.\]

	 Thus, we have proved that $B(0, 1+M t) \cap B(x, 1+M t) \setminus U \subseteq T(c t)$ for $t \in (0,t_0)$, with $c=2(2M+M^2)$ and some $t_0$ small, and the proof is complete.

	 \begin{comment}
	Consider the set
	\[U(x) = L(x) \sqcup (B(0,x) \cap [1,\infty]_{C(x)}^0) \sqcup (\B \cap [1,\infty]_{C(-x)}^x)\]

	We write $G_{K_t}(x) = [0,\rho_{K_t}]_{S^{n-1}}^0 \cap [0,\rho_{K_t}]_{S^{n-1}}^x$, then
	\begin{align}
		U \cap G_{K_t}(x)
		&= L(x) \sqcup (B(0,x) \cap [1,\infty]_{C(x)}^0 \cap [0,\rho_{K_t}]_{S^{n-1}}^0) \\
		&\sqcup (\B \cap [1,\infty]_{C(-x)}^x \cap [0,\rho_{K_t}]_{S^{n-1}}^x)\\
		&= L(x) \sqcup (B(0,x) \cap [1,\rho_{K_t}]_{C(x)}^0) \sqcup (\B \cap [1,\rho_{K_t}]_{C(-x)}^x)\\
		&= U \cap L(K_t, x)\\
	\end{align}

	At the same time we know that $G_{K_t}(x), L(K_t, x) \subseteq B(0, 1+M t) \cap B(x, 1+M t)$, so it is enough to prove that
		 \[B(0, 1+M t) \cap B(x, 1+M t) \setminus U \subseteq T(c t).\]
	for some $d>0$ and $t \in (0,t_0)$, $t_0 > 0$.

	Let $y \in B(0, 1+M t) \cap B(x, 1+M t) \setminus U$.
	Call $a$ the distance from $y$ to the line parallel to $x$ passing through the origin, and $b$ the ...
	Let $l = |x|$.

	We have 
	%Consider the two sets
	%\[U_1 = \B \cup (\B+x)\]
	%\[U_2 = L(x) \sqcup [1,\infty]_{C(x)} \sqcup [1,\infty]_{C(-x)}^x\]
%
	%First we claim that there exist $d,t_0>0$ depending only on $M$, such that 
	%\[B(0, 1+M t) \cap B(x, 1+M t) \setminus (U_1 \cap U_2) \subseteq T(c t).\]
%
	%Now take $y \in G_{K_t}(x) \setminus T(c t)$. Since $G_{K_t}(x) \subseteq B(0, 1+M t) \cap B(x, 1+M t)$, we know that 
	%\[y \in U_1 \cap U_2 = L(x) \cup ((x+\B) \cap [1,\infty]_{C(x)}) \cup (\B \cap [1,\infty]_{C(-x)}^x)\].
	%If $y \in \B \cap (x+\B) = L(x)$ we are done.
	%Assume $y \not \in (x+\B)$, then $y \in [1,\infty]_{C(-x)}^x$ and we conclude $y \in [1,\rho_K]_{C(-x)}^x$.
	%If $y \not \in \B$, then $y \in [1,\infty]_{C(x)}$ and then $y \in [1,\rho_K]_{C(x)}$.
%
	%Conversely, take $y \in L(x) \cup [1,\rho_K]_{C(x)} \cup [1,\rho_K]_{C(-x)}^x \setminus T_d(t)$.
	%Again $L(x) \cup [1,\infty]_{C(x)} \cup [1,\infty]_{C(-x)}^x \subseteq B(0, 1+M t) \cap B(x, 1+M t)$ so we must have $y \in U_1 \cap U_2$.
	%Then $y \in U_1 \cap U_2 \cap (L(x) \cup [1,\rho_K]_{C(x)} \cup [1,\rho_K]_{C(-x)}^x )$ which equals
	%\[L(x) \cup ((x+\B) \cap [1,\rho_K]_{C(x)}) \cup (\B \cap [1,\rho_K]_{C(-x)}^x)\]
	%which is clearly inside $G_{K_t}(x)$ since $\B \subseteq K_t$ and $x+\B \subseteq x+K_t$.
	 \end{comment}
\end{proof}

The following proposition guarantees that the computations of first and second derivatives in the next section are correctly justified.
\begin{proposition}
	\label{res_gk_smooth}
	Let $K$ be a $C^\beta$ radial set with $\beta \geq 1$.
	Then there is $\varepsilon>0$ such that the function 
	\begin{align}
		S^{n-1} \times (-\varepsilon, \varepsilon) \times (0,2) & \to  \R \\
		(v, t, s) & \mapsto  g_v(t, s) = g_{K_t}(s v)
	\end{align}
	is $C^\beta$ smooth.
	Moreover, $\frac {\partial}{\partial s}g_v(t,s) \neq 0$.
\end{proposition}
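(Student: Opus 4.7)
The plan is to reduce the statement to the smoothness of the volume of a parameter-dependent domain with smoothly moving boundary, by transporting everything to the unit ball via a smooth isotopy. Fix a cutoff $\eta \in C^\infty([0,\infty))$ with $\eta = 0$ on $[0,1/4]$ and $\eta = 1$ on $[1/2, 2]$ (vanishing outside a bounded set), and set
\[\Psi_t(z) = \bigl(1 + t\, \eta(|z|)\, \rho_K(z/|z|)\bigr)\, z\]
for $z\neq 0$, with $\Psi_t(0) = 0$. Then $(z,t)\mapsto \Psi_t(z)$ is $C^\beta$ on $\R^n \times (-\varepsilon,\varepsilon)$ for some $\varepsilon>0$, $\Psi_0 = \mathrm{id}$, and $\Psi_t$ is a $C^\beta$ diffeomorphism of $\R^n$ for small $t$ with $\Psi_t(\B) = K_t$, because $\eta(1)=1$ makes $\Psi_t|_{S^{n-1}}$ the radial parametrization of $\partial K_t$, while the vanishing of $\eta$ near the origin removes the singularity of $\rho_K(z/|z|)$ there.

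The substitution $y = \Psi_t(z)$ rewrites
\[g_v(t,s) \;=\; \int_{\B} \chi\bigl[F(z,t,s,v)\leq 1\bigr]\, |\det D\Psi_t(z)|\, dz, \qquad F(z,t,s,v) = \bigl|\Psi_t^{-1}(\Psi_t(z)-sv)\bigr|^2,\]
with $F$ of class $C^\beta$ in all arguments. At $t=0$ one has $F(z,0,s,v)=|z-sv|^2$, and for $s\in(0,2)$ the level set $\{F=1\}$ meets $\partial\B$ transversally along $\partial\B\cap\partial(\B+sv)$, with $\nabla_z F\neq 0$ there. Both properties persist on a neighbourhood of $\{0\}\times[s_0,s_1]\times S^{n-1}$ for every $[s_0,s_1]\subset(0,2)$, by continuity. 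The region $\{z\in\B : F\leq 1\}$ is then bounded by two $C^\beta$ hypersurfaces meeting transversally, with $C^\beta$ dependence on parameters, so its volume---hence $g_v$---is a $C^\beta$ function of $(t,s,v)$. The main obstacle is making this last step rigorous, and I would use the implicit function theorem in a neighbourhood of each point of $\partial\B\cap\{F=1\}$ to simultaneously straighten both surfaces, turning a neighbourhood of the ``corner'' into a coordinate quadrant, so that the local contribution to the integral becomes an iterated integral of a $C^\beta$ function over a quadrant, whose smooth dependence on the parameters is immediate.

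For the non-vanishing of $\partial_s g_v$, by the $C^1$ smoothness just established it suffices to check it at $t=0$. Slicing $\B$ by lines parallel to $v$ gives
\[g_\B(sv) = \int_{v^\perp} \bigl(\vol[1]{\B\cap(p+\R v)}-s\bigr)_+\, dp,\]
and differentiating under the integral sign yields $\partial_s g_v(0,s) = -\vol[n-1]{P_{v^\perp}(\B\cap(\B+sv))}<0$ for every $s\in(0,2)$. By continuity of $\partial_s g_v$, this inequality persists for $(t,s,v)$ in a (possibly smaller) neighbourhood of $\{0\}\times(0,2)\times S^{n-1}$, which is the claim.
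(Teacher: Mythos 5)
Your proof is correct but follows a genuinely different route from the paper's. The paper works directly with $G_{K_t}(sv)$: using transversality of $\partial K_t$ and $\partial K_t+sv$ (and of both with lines parallel to a fixed direction $v_0$), it writes $G_{K_t}(sv)$ as the region between two $C^\beta$ graphs $f_\pm$ over its projection $P_{v,t,s}$ onto $\langle v_0\rangle^\perp$, and then expresses $g_v(t,s)=\int_{P_{v,t,s}}(f_+-f_--s)$ in polar coordinates on the base; the ``corner'' $\partial K_t\cap(\partial K_t+sv)$ is absorbed into the boundary of the base domain, where the integrand vanishes, so smoothness and the identity $\partial_s g_v=-\vol[n-1]{P_{v,t,s}}<0$ come out in one stroke. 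You instead transport everything to the fixed ball by the radial isotopy $\Psi_t$ and reduce to the smoothness of $\int_{\B}\chi[F\le 1]\,J\,dz$, handling the corner by simultaneous straightening of $\{|z|=1\}$ and $\{F=1\}$; the step you flag as the main obstacle does go through exactly as you sketch (transversality makes $(|z|^2-1,\,F-1)$ part of a parameter-dependent $C^\beta$ coordinate system, and with a partition of unity each local contribution becomes an integral of a jointly $C^\beta$ integrand over a fixed quadrant, differentiable under the integral sign). Your approach buys a cleaner separation between the geometry (a fixed reference configuration) and the analysis, at the cost of an extra change of variables and the inverse map $\Psi_t^{-1}$; the paper's graph representation is more economical and yields the formula for $\partial_s g_v$ for all small $t$ rather than only at $t=0$ plus continuity. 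One caveat applies equally to both arguments: the compactness/continuity steps give smoothness and $\partial_s g_v\neq 0$ only on a neighbourhood of $S^{n-1}\times\{0\}\times[s_0,s_1]$ for each compact $[s_0,s_1]\subset(0,2)$, not for a single $\varepsilon$ uniform over all $s\in(0,2)$ (indeed near $s=2$ and $t<0$ the intersection can become empty); since this is exactly the paper's level of precision and suffices for every later use (where $s_0$ is fixed by $\delta$), it is not a defect of your argument relative to the paper's.
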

\begin{proof}
	Fix $v_0 \in S^{n-1}, s_0 \in (0,2)$.
	Since $K$ is $C^\beta$ and $\partial K_0 = S^{n-1}$ intersects transversally with $\partial K_0 + s_0 v_0$, there is $\varepsilon>0$ small such that for all $(v,t,s)$ in an $\varepsilon$ neighborhood of $(v_0,0, s_0)$, the boundaries $\partial K_t$ and $\partial K_t+s v$ intersect transversally to each other, and to any line parallel to $v_0$ passing through a point in an $\varepsilon$ neighborhood of $G_{K_t}$.

	Let $P_{v,t,s}$ be the orthogonal projection of $G_{K_t}(s v)$ onto the plane orthogonal to $v_0$, $\langle v_0 \rangle^\perp$.
	By transversality, reducing $\varepsilon$ further if necessary, the set $G_{K_t}(s v)$ can be described as the region between the graphs of two functions $f_-$ and $f_+$. This is,
	\[G_{K_t}(s v) = \{y + l v_0 : y \in P_{v,t,s}, f_-(v,t,y) + s \leq l \leq f_+(v,t,y)\}\]
	for two $C^\beta$ functions $f_\pm$ defined for $(v,t)$ in a neighborhood of $(v_0, 0)$, and $y$ in a fixed open set containing $P_{v,t,s}$ for all such $(v,t,s)$.
	The volume can be computed as
	\begin{align}
		\label{eq_smooth_covariogram}
		g_v(t,s) 
		&= \int_{P_{v,t,s}} (f_+(v,t,y) - f_-(v,t,y) - s) d y\\
		&= \int_{S^{n-1} \cap \langle v_0 \rangle^\perp} \int_0^{\rho(v,t,s)} r^{n-2} (f_+(v,t,r \xi ) - f_-(v,t,r \xi ) - s) d r d\xi,
	\end{align}
	where $\rho(v,t,s)$ is the ($C^\beta$-smooth) radial function of $P_{v,t,s}$.
	Then it is clear that $g_v(t,s)$ is $C^\beta$ smooth around $(v_0, 0, s_0)$.

	By \eqref{eq_smooth_covariogram}, the partial derivative with respect to $s$ is exactly $-\vol[n-1]{P_{v,t,s}}$, which is non-zero since $s \in (0,2)$ implies $G_{K_t}$ has non-empty interior.
\end{proof}

\section{First-order Taylor Expansion of $C_\delta \bKt$}
\label{sec_first_order}

In order to compute the derivative of $\vol{C_\delta \bKt}$ we need to compute that of the covariogram function.

\begin{proposition}
	\label{res_d1_g}
	Let $K$ be a radial set and $K_t$ be the radial body defined by \eqref{eq_def_Kt}, then for $x \in \R^n$ with $0 < |x| < 2$,
	\begin{equation}
		\label{eq_d1_g}
		g_{K_t}(x) = \vol{L(x)} + t \int_{S(x)} \rho_{K}(v) d v + O(t^2).
	\end{equation}
	For $x=0$,
	\begin{equation}
		\label{eq_d1_k}
		\vol{K_t} = \vol{\B} + t I_K + O(t^2).
	\end{equation}
	Here $\frac{O(t^2)}{t^2}$ is bounded by a constant independent of $t \in (0,1)$ (but possibly depending on $K$ and $x$).
\end{proposition}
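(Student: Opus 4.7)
The plan is to establish \eqref{eq_d1_g} by replacing the hard-to-handle set $G_{K_t}(x)$ with the more explicit $L(K_t,x)$ via Proposition~\ref{res_serious_approximation}, then to compute $\vol{L(K_t,x)}$ directly in polar coordinates, and finally to expand $(1+t\rho_K(v))^n$ to first order. Formula \eqref{eq_d1_k} is simpler and follows from a single polar-coordinate calculation on $K_t$ itself.

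For the main formula, I fix $x$ with $0<|x|<2$ and set $M=\sup_{v\in S^{n-1}}\rho_K(v)$. Proposition~\ref{res_serious_approximation} supplies $c,t_0>0$ such that for $t\in(0,t_0)$,
\[ G_{K_t}(x)\triangle L(K_t,x)\subseteq T(ct). \]
Since $T(ct)$ is the $ct$-neighborhood of the transverse intersection $S^{n-1}\cap(S^{n-1}+x)$, which is a smooth $(n-2)$-dimensional submanifold (this uses $0<|x|<2$), its Lebesgue measure is $O(t^2)$ with a constant depending only on $|x|$ and $M$; hence $g_{K_t}(x)=\vol{L(K_t,x)}+O(t^2)$. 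The disjoint decomposition in the definition of $L(K_t,x)$, together with translation-invariance of volume, gives
\[ \vol{L(K_t,x)} = \vol{L(x)} + \int_{C(x)}\!\!\int_1^{1+t\rho_K(v)} r^{n-1}\,dr\,dv + \int_{C(-x)}\!\!\int_1^{1+t\rho_K(v)} r^{n-1}\,dr\,dv. \]
Expanding
\[ \int_1^{1+t\rho_K(v)} r^{n-1}\,dr = \frac{(1+t\rho_K(v))^n-1}{n} = t\rho_K(v)+O(t^2), \]
with remainder bounded uniformly by a constant depending only on $n$ and $M$, and using $C(x)\sqcup C(-x)=S(x)$, yields
\[ \vol{L(K_t,x)} = \vol{L(x)} + t\int_{S(x)}\rho_K(v)\,dv + O(t^2), \]
and combining with the tube estimate produces \eqref{eq_d1_g}.

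For \eqref{eq_d1_k}, no approximation is needed: polar coordinates give
\[ \vol{K_t} = \int_{S^{n-1}} \frac{(1+t\rho_K(v))^n}{n}\,dv = \vol{\B} + t\int_{S^{n-1}}\rho_K(v)\,dv + O(t^2), \]
which matches the claim under the natural reading of $I_K$ as the integral of $\rho_K$ over $S^{n-1}$ (reducing to the definition from Section~\ref{sec_notation} when $n=2$). The only delicate step is the codimension count that makes $\vol{T(ct)}=O(t^2)$ rather than $O(t)$; everything else is a bookkeeping expansion of $(1+t\rho_K)^n$, uniform in $t\in(0,1)$ with constants depending on $K$ and $x$ as required.
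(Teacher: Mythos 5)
Your proposal is correct and follows essentially the same route as the paper: both invoke Proposition~\ref{res_serious_approximation} to replace $G_{K_t}(x)$ by the disjoint union $L(x)\sqcup[1,\rho_{K_t}]_{C(x)}\sqcup[1,\rho_{K_t}]_{C(-x)}^x$ up to a tube of volume $O(t^2)$, and then expand $\frac1n\bigl((1+t\rho_K)^n-1\bigr)$ in polar coordinates over $S(x)$. Your explicit codimension-two justification for $\vol{T(ct)}=O(t^2)$ is the same point the paper makes by bounding the torus volume by $c_n(ct)^2$.
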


\begin{proof}
	Thanks to Proposition \ref{res_serious_approximation}, the set $G_{K_t}(x)$ can be approximated as the disjoint union
\begin{equation}
	\label{eq_separation}
	G_{K_t}(x)
	\sim  [1, \rho_K]_{C(x)} 
	\sqcup [1, \rho_K]_{C(-x)}^x
	\sqcup  L(x)
\end{equation}
	where $A \sim B$ means that the symmetric difference $A \Delta B$ has volume $O(t^2)$.
	Indeed, the symmetric difference must lie inside the torus $T(c t)$, whose volume is bounded by $c_n (c t)^2$ where $c_n$ is some dimensional constant.

	We obtain
	\[
		g_{K_t}(x) =\vol[2]{L(x)} + \vol[2]{[1, \rho_K]_{C(x)}} + \vol[2]{[1, \rho_K]_{C(-x)}} + O(t^2).
	\]
	Integrating in polar coordinates,
\begin{align}
	\vol[2]{[1, \rho_K]_{C(x)}} &+ \vol[2]{[1, \rho_K]_{C(-x)}} 
	= \vol[2]{[1, \rho_K]_{S(x)}} \\
	&= \frac 1n \int_{S(x)} (\rho_{K_t}(v)^n - 1) d v \\
	&= \frac 1n \int_{S(x)} (n t \rho_{K}(v) + O(t^2)) d v \\
	\label{eq_d2_C}
	&= t \int_{S(x)} \rho_{K}(v) d v + O(t^2),
\end{align}
and the proposition follows.
\end{proof}

\begin{proof}[Proof of Theorem \ref{res_first_variation}]
	For $t=0$, $\overline{K_0}$ is the Euclidean ball of volume $1$.
	The body $C_\delta \overline{K_0}$ is also a ball, and its radius $\rho_0$ satisfies $L(\rho_0) = \delta$.

Start observing that for any $\lambda>0$,
\[g_{\lambda K}(\lambda x) = \lambda^n g_K(x),\]
implying that
	\[g_{\bK}(x) = \vol{K}^{-1} g_K(\vol{K}^{1/n} x).\]

	Since $\rho_{C_\delta \bKt}(v) v$ is in the boundary of $C_\delta \bKt$, by the continuity of volume, the radial function $\rho_{C_\delta \bK}(v)$ satisfies
\[\delta = g_{\bKt}(\rho_{C_\delta \bKt}(v) v) = \vol{K_t}^{-1} g_{K_t}(\vol{K_t}^{1/n} \rho_{C_\delta \bKt}(v) v).\]

	We get
	\begin{equation}
		\label{eq_rho}
		\delta = k(t)^{-1} g_v(t, \rho_v(t) k(t)^{1/n}).
	\end{equation}
	Clearly, for $t$ close to $0$, the function $k(t)$ is $C^1$ smooth and bounded away from $0$.
	By Proposition \ref{res_gk_smooth} and the Implicit Function Theorem, the function $\rho_v(t)$ must be $C^1$ with respect to $(t,v)$, in a neighborhood of $t=0$.

	We can take derivative of \eqref{eq_rho} with respect to $t$, to obtain
\begin{align}
	\label{eq_d1_rho1}
	0 &= -k(t)^{-2} k'(t) g_v(t, \rho_v(t) k(t)^{1/n}) \\
	&+ k(t)^{-1} \partial_s g_v(t,\rho_v(t) k(t)^{1/n})\left(\frac 1n k(t)^{\frac 1n -1} k'(t) \rho_v(t) +k(t)^{1/n} \rho_v'(t) \right) \\
	&+ k(t)^{-1} \partial_t g_v(t,\rho_v(t) k(t)^{1/n})
\end{align}
	%where $\partial_s g_v, \partial_t g_v$ are the partial derivatives of $g$.
	Notice that $g_v(0,s) = L(s)$ for every $s>0$, so $\partial_s g_v(0,s) = L'(s)$.
From \eqref{eq_d1_rho1} we compute
\begin{equation}
    \label{eq_d1_rho2}
    \rho_v'(0) = \omega_n^{-1/n} L'(s_0)^{-1}  \left( \omega_n^{-1} k'(0) L(s_0) - \partial_t g_v(0, s_0) \right) -  \frac 1n \omega_n^{-1} k'(0) \rho_0
\end{equation}
	where $\omega_n = k(0)$ and $s_0 = s(0) = \rho_v(0) k(0)^{1/n}$ is independent of $v$.

The volume of $C_\delta \bKt$ can be computed as
	\begin{equation}
		\label{eq_radial_volume}
		\vol{C_\delta \bKt} = \frac 1n \int_{S^{n-1}} \rho_{C_\delta \bKt}(v)^n d v
	\end{equation}
	so taking derivative with respect to $t$ and using \eqref{eq_d1_rho2} and \eqref{eq_d1_k},
\begin{align}
     \frac{\partial}{\partial t} \vol{C_\delta \bKt} \bigg|_{t=0}
	&= \rho_0^{n-1} \int_{S^{n-1}} \rho_v'(0) d v \\
	&= \rho_0^{n-1} \omega_n^{-1/n-1} I_K \frac {L(s_0)} {L'(s_0)} n \omega_n- \frac 1n \omega_n ^{-1} I_K  \rho_0^n n \omega_n\\
	&- \rho_0^{n-1} \omega_n ^{-1/n} L'(s_0)^{-1} \int_{S^{n-1}}  \frac{\partial}{\partial t} g_{K_t}(s_0 v) \bigg|_{t=0} d v.
\end{align}

	By \eqref{eq_d1_g} in Proposition \ref{res_d1_g} we have $ \frac{\partial}{\partial t} g_{K_t}(s_0 v) \bigg|_{t=0} = W_{K,v}(s_0)$.
	Observe that $S(x)$ is a union of two spherical caps, so for $v,w \in S^{n-1}$, we have $v \in S(s_0 w)$ if and only if $w \in S(s_0 v)$, then
\begin{align}
	\label{eq_averaged_S}
	\int_{S^{n-1}} W_{K,v}(s_0) d v 
    &= \int_{S^{n-1}} \int_{S^{n-1}} \chi_{S(s_0 v)}(w) \rho_K(w) d w d v \\
    &= \int_{S^{n-1}} \int_{S^{n-1}} \chi_{S(s_0 w)}(v) d v \rho_K(w) d w  \\
	&= S(s_0) I_K.
\end{align}
We get
\begin{align}
     \frac{\partial}{\partial t} \vol{C_\delta \bKt} \bigg|_{t=0}
	&= \rho_0^{n-1} \omega_n ^{-1/n} n I_K \frac {L(s_0)} {L'(s_0)} -  I_K  \rho_0^n
	- \rho_0^{n-1} \omega_n ^{-1/n} L'(s_0)^{-1} S(s) I_K  \\
&= I_K L'(s_0)^{-1} \omega_n^{-1/n} \rho_0^{n-1} 
	(n L(s_0) - s_0 L'(s_0) - S(s_0) ). \\
\end{align}
Finally we shall prove that
\[n L(s_0) = s_0 L'(s_0) + S(s_0)\]
which concludes the proof.

Consider the $n-1$ dimensional circle $S_2 = (\frac 12 s_0 v + v^\perp) \cap (\B+s_0 v)$ and observe that $L'(s_0) = \vol[n-1]{S_2}$.
Consider the cone $D_2$ with vertex at the origin and base $S_2$.
Using the cone volume measure (see (9.33) of \cite{Sh1}), this is, $\frac 1n |\langle n(x), x \rangle| d S(x)$ (where $n$ is a unit normal vector to the surface) to compute the volumes of the cones we get
\begin{align}
	L(s_0)
	&= 2(\vol{D(x)} - \vol{D_2}) \\
	&= \frac 2n \left( \int_{C(x)} 1 d S(x)  - \int_{S_2} \frac{s_0}2 d S(x)\right) \\
	&= \frac 2n ( \frac 12 S(s) - \frac{s_0}2 L'(s_0) ).
\end{align}
and the proof is complete.
\end{proof}

\section{Second-order Taylor Expansion of $C_\delta \bKt$ in the plane}
\label{sec_second_order}

In order to compute the second derivative of $\vol{C_\delta \bKt}$ we need a second-order estimate of the covariogram of $K_t$.
From now on, all computations will be made for $n=2$.
We will make use of Proposition \ref{res_serious_approximation} again. In dimension $2$, the set $T(c t)$ is a union of two closed balls.
\begin{proposition}
	\label{res_d2_g}
	Let $K \subseteq \R^2$ be a planar radial set and $K_t$ be the radial body defined by \eqref{eq_def_Kt}, then
	\begin{equation}
	\label{eq_d2_g}
		g_{K_t}(x) = L(x) + t \int_{S(x)} \rho_K(v) d v + t^2 \frac 12 \int_{S(x)} \rho_K(v)^2 d v + t^2 T_K(x) + o(t^2) \\ 
	\end{equation}
	where $\frac{o(t^2)}{t^2} \to 0$ as $t \to 0^+$, for fixed $K$ and $x$, and
	\begin{multline}
		T_K(x) 
		= \frac 1{2 |x| \sqrt{4 - |x|^2}} \Big( 4(\rho_K(v_1)\rho_K(v_2) + \rho_K(v_3)\rho_K(v_4)) \\
		+(\rho_K(v_1)^2+\rho_K(v_2)^2+\rho_K(v_3)^2+\rho_K(v_4)^2)(|x|^2-2)\Big)
	\end{multline}
	where $(v_1, v_2)$ are the boundary points of $S(x)$ and $(v_3, v_4)$ are the lower ones, as shown in Figure \ref{fig_4points}.

	Moreover,
	\begin{equation}
	\label{eq_d2_k}
	\vol{K_t} = \pi + t I_K + t^2 \vol{K}.
	\end{equation}
\end{proposition}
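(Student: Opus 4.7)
The plan is to combine Proposition \ref{res_serious_approximation} with a direct polar-coordinate computation for the ``bulk'' part of $g_{K_t}(x)$ and a local computation of the correction near each of the two intersection points $v_1,v_3$ of $\partial\B$ with $\partial(\B+x)$.

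Formula \eqref{eq_d2_k} follows from polar integration in dimension two: $\vol{K_t}=\tfrac12\int_{S^1}(1+t\rho_K(v))^2\,dv = \pi+tI_K+t^2\vol{K}$, using $\vol{K}=\tfrac12\int_{S^1}\rho_K^2\,dv$. For the covariogram, I would write
\[g_{K_t}(x) = \vol{L(K_t,x)} + \Big(\vol{G_{K_t}(x)\cap T(ct)} - \vol{L(K_t,x)\cap T(ct)}\Big),\]
which is justified by Proposition \ref{res_serious_approximation}. The bulk term is exact by polar coordinates (centered at $0$ for the crescent $[1,\rho_{K_t}]_{C(x)}$ and at $x$ for $[1,\rho_{K_t}]_{C(-x)}^x$):
\[\vol{L(K_t,x)} = \vol{L(x)} + \tfrac12\int_{S(x)}(\rho_{K_t}(v)^2-1)\,dv = \vol{L(x)} + t\int_{S(x)}\rho_K + \tfrac{t^2}{2}\int_{S(x)}\rho_K^2,\]
matching the first three terms of \eqref{eq_d2_g}.

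The remaining task is to show the parenthesized correction equals $t^2 T_K(x)+o(t^2)$. I would localize near $v_1$; the analysis near $v_3$ is identical with $v_1,v_2$ replaced by $v_3,v_4$. Since $v_2 = v_1-x$ is a unit vector (the outward normal to $\partial(\B+x)$ at $v_1$), the boundaries $\partial K_t$ and $\partial(K_t+x)$ near $v_1$ coincide, up to quadratic deviations that contribute only $O(t^3)$ in area, with the affine lines $\{y:(y-v_1)\cdot v_1=p\}$ and $\{y:(y-v_1)\cdot v_2=q\}$, where $p=t\rho_K(v_1)$ and $q=t\rho_K(v_2)$. Introduce the coordinates $u=(y-v_1)\cdot v_1$, $w=(y-v_1)\cdot v_2$; since $v_1\cdot v_2=-\cos(2\alpha)$ with $|x|=2\cos\alpha$, the Lebesgue element is $dy=du\,dw/\sin(2\alpha)$. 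The two lines $\{u=0\},\{w=0\}$ partition a small disc around $v_1$ into four sectors. In each sector I would identify which subpiece of $L(K_t,x)=L(x)\sqcup[1,\rho_{K_t}]_{C(x)}\sqcup[1,\rho_{K_t}]_{C(-x)}^x$ lies there (for instance, $[1,\rho_{K_t}]_{C(x)}$ locally becomes $\{0<u<p,\,u\cos(2\alpha)+w<0\}$ from the direction constraint), and then compute sector by sector $\vol{G_{K_t}(x)\cap\text{sector}}-\vol{L(K_t,x)\cap\text{sector}}$, where $G_{K_t}(x)$ is locally $\{u\le p,w\le q\}$. Summing the four sector contributions yields
\[\frac{1}{\sin(2\alpha)}\Big(pq+\tfrac{\cos(2\alpha)}{2}(p^2+q^2)\Big).\]
Adding the analogous contribution at $v_3$ with $\rho_K(v_3),\rho_K(v_4)$ replacing $\rho_K(v_1),\rho_K(v_2)$, and using $|x|^2-2=2\cos(2\alpha)$ together with $|x|\sqrt{4-|x|^2}=2\sin(2\alpha)$, gives exactly $T_K(x)$.

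The main obstacle is the sector-by-sector accounting at each corner, which depends on the sign of $\cos(2\alpha)$: for $|x|>\sqrt{2}$ the crescents lie in the two ``outside one, inside the other'' sectors, whereas for $|x|<\sqrt{2}$ they protrude into the ``outside both'' sector. One must verify that in both regimes the signed area difference collapses to the same symmetric expression above. The secondary check — that replacing the circular arcs by their tangent lines alters the corner area by only $o(t^2)$ — follows from the quadratic expansion of $\partial K_t$ near $v_1$, whose curvature corrections give an $O(t^2)$ normal displacement over a region of diameter $O(t)$, hence an $O(t^3)$ area error.
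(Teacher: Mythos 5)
Your proposal is correct and follows essentially the same route as the paper: both reduce the problem to Proposition \ref{res_serious_approximation}, compute the bulk $\vol{L(K_t,x)}$ exactly in polar coordinates, and obtain $T_K(x)$ from a linearized area computation at the two circle-intersection points (the paper via a blow-up and the signed area of a quadrilateral, you via skew coordinates and a sector decomposition — the same elementary computation). Your corner contribution $\frac{1}{\sin(2\alpha)}\bigl(pq+\tfrac{\cos(2\alpha)}{2}(p^2+q^2)\bigr)$ agrees exactly with the paper's determinant formula $\tfrac 12\bigl(\det(\rho_K(\alpha)v_\alpha,p_\alpha)+\det(p_\alpha,\rho_K(\pi-\alpha)v_{\pi-\alpha})\bigr)$, including the sign behaviour across $\alpha=\pi/4$ that you flag as the main case distinction.
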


\begin{proof}
	Without loss of generality we may assume $x = (|x|, 0)$.
	Let $p_+, p_-$ be the upper and lower intersection points of $S^1$ and $S^1 + x$.
	Proposition \ref{res_serious_approximation} provides constants $t_0 ,c > 0$ sufficiently small such that outside the balls $B(p_\pm, c t)$, the sets in the left and right of \eqref{eq_separation} are equal for all $t \in [0,t_0]$. This is,
\begin{multline}
	\label{eq_separation_2}
	G_{K_t}(x)  \setminus B(p_+, c t) \setminus B(p_-, c t) \\
	= \left( L(x) \sqcup [1, \rho_{K_t}]_{C(x)} \sqcup [1,\rho_{K_t}]_{C(-x)}^x \right)
	\setminus B(p_+, c t) \setminus B(p_-, c t).
\end{multline}
(see Figure \ref{fig_torus})
	\begin{figure}
		\centering
		\begin{subfigure}[t]{0.45\textwidth}
			\caption{Sets of equation \eqref{eq_separation_2} and $B(p_+, c t)$.}
			\label{fig_torus}
			\includegraphics[width=\textwidth]{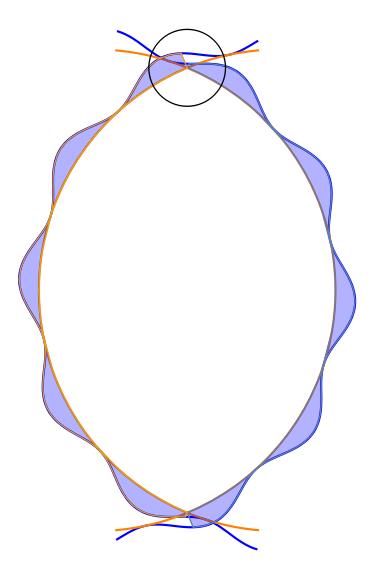}
		\end{subfigure}
		\hfill
		\begin{subfigure}[t]{0.45\textwidth}
			\caption{Boundary points of $S(x)$,\\$v_1, v_2, v_3, v_4$}
			\label{fig_4points}
			\includegraphics[width=\textwidth]{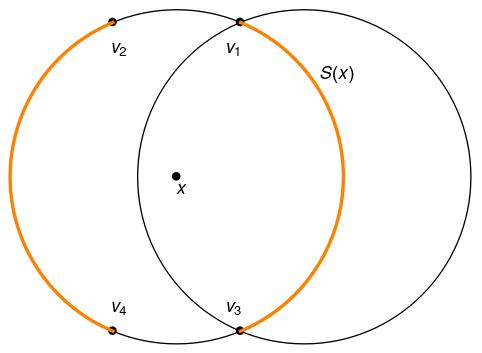}
		\end{subfigure}
	\end{figure}

	To simplify the computations, we will only compute the volume of $G_{K_t}(x)$ intersected with the upper half-plane $H^+$.
	For any measurable $A \subseteq \R^2$, we denote $\vol[2+]{A} = \vol[2]{A \cap H^+}$.
	The intersection with the lower half-plane is similar and will be omitted.
	For small $t>0$, $B(p_+, c t)$ lies in $H^+$.

	To compute the second order term inside the ball we use a blow-up argument at the point $p_+$. The set
	\[R_1(t) = \frac 1t ((G_{K_t}(x) \setminus L(x) \cap B(p_+, c t))-p_+) \]
	is uniformly bounded with respect to $t$, and converges in the Hausdorff metric to
	\begin{multline}
		\label{eq_R1}
		R_1(0) = \Big\{y \in \R^2 : |y|\leq d, \max\{ y . v_\alpha, y . v_{\pi - \alpha} \} \geq 0, \\ y. v_\alpha \leq \rho_K(\alpha), y.v_{\pi-\alpha} \leq \rho_K(\pi - \alpha) \Big\}
	\end{multline}
	(see Figure \ref{fig_intersection3}).
	%where $v_\alpha = (\cos(\alpha), \sin(\alpha))$.

	On the other hand, the set
	\[R_2(t) = \frac 1t \Big( ( [1,\rho_{K_t}]_{C(x)} \sqcup [1,\rho_{K_t}]_{C(-x)}^x - p_+ ) \cap B(p_+, c t)\Big)\]
	is also uniformly bounded with respect to $t$ and converges in the Hausdorff metric to
	\begin{multline}
		\label{eq_R2}
		R_2(0) 
	= \bigg\{y \in \R^2 : |y|\leq d, (0 \leq y.v_\alpha \leq \rho_K(v_\alpha), y.v_{\alpha-\pi/2} \geq 0 ) \\
	\text{ or } (0 \leq y.v_{\pi-\alpha} \leq \rho_K(v_{\pi-\alpha}), y.v_{\pi-\alpha+\pi/2} \geq 0) \bigg\}
	\end{multline}
	(see Figure \ref{fig_intersection1}).

	We get from \eqref{eq_separation_2}, \eqref{eq_R1}, \eqref{eq_R2} and \eqref{eq_d2_C} that
	\begin{align}
		\Big| G_{K_t}(x)  \Big|_{2+}
		&= \vol[2+]{ G_{K_t}(x) \setminus L(x)  \setminus B(p_+, c t) } + \vol[2]{t R_1(t)} + \vol[2+]{L(x)} \\
		&= \vol[2+]{ \left( [1,\rho_{K_t}]_{C(x)} \sqcup [1,\rho_{K_t}]_{C(-x)}^x \right) \setminus B(p_+, c t)  } + \vol[2]{t R_2(t)} + |L(x)|\\
		&+ t^2(\vol[2]{R_1(t)} - \vol[2]{R_2(t)}) \\
		&= \vol[2+]{  [1,\rho_{K_t}]_{C(x)} \sqcup [1,\rho_{K_t}]_{C(-x)}^x  } + \vol[2+]{L(x)}\\
		&+ t^2(R_1(0) - R_2(0)) + o(t^2) \\
		&= \vol[2+]{L(x) } + t \int_{S(x) \cap H_+} \rho_{K_t}(v) d v + \frac 12 t^2 \int_{S(x) \cap H_+} \rho_{K_t}(v)^2 d v \\
		&+ t^2(\vol[2]{R_1(0)} - \vol[2]{R_2(0)}) + o(t^2)
	\end{align}

	\begin{figure}
		\centering
		\begin{subfigure}[b]{0.32\textwidth}
			\caption{Approximation of the set $R_1$ near $p_+$.}
			\label{fig_intersection1}
			\includegraphics[width=\textwidth]{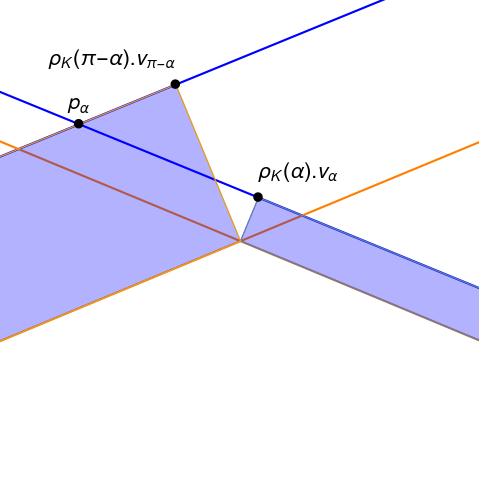}
		\end{subfigure}
		\hfill
		\begin{subfigure}[b]{0.32\textwidth}
			\caption{Difference between $R_1$ and $R_2$.}
			\label{fig_intersection2}
			\includegraphics[width=\textwidth]{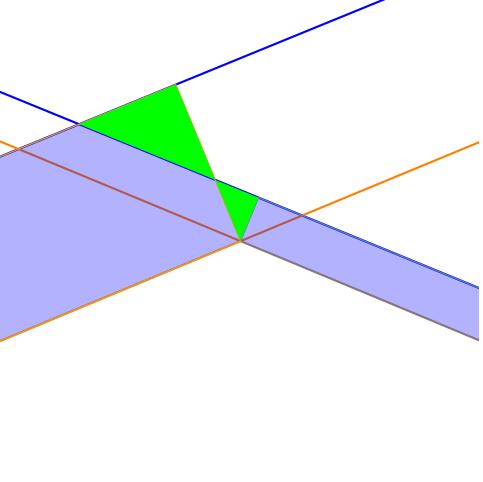}
		\end{subfigure}
		\hfill
		\begin{subfigure}[b]{0.32\textwidth}
			\caption{Approximation of the set $R_2$ near $p_+$.}
			\label{fig_intersection3}
			\includegraphics[width=\textwidth]{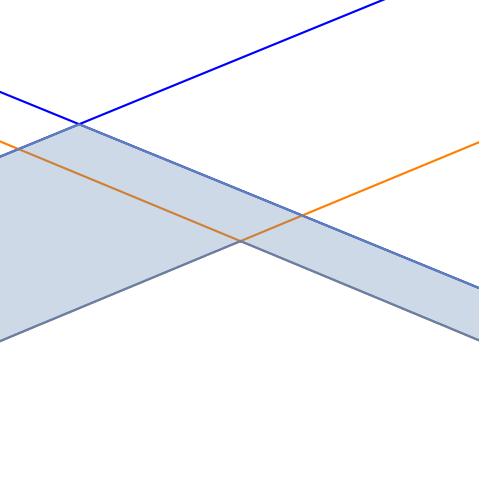}
		\end{subfigure}
	\end{figure}

	The difference $\vol[2]{R_1(0)} - \vol[2]{R_2(0)}$ is exactly the signed area of the quadrilateral with (ordered) vertices $0,\rho_{K_t}(\alpha) v_\alpha, p_\alpha, \rho_{K_t}(\pi-\alpha) v_{\pi-\alpha}$, where $p_\alpha$ is the intersection point of the two lines $p_\alpha \cdot v_\alpha = \rho_{K_t}(\alpha)$ and $p_\alpha \cdot v_{\pi-\alpha} = \rho_{K_t}(\pi-\alpha)$. The sign of the area of each region bounded by the quadrilateral is given by the sens of rotation of the boundary around it. (see Figure \ref{fig_intersection2})
	This quadrilateral is always convex if $\alpha \in (0,\pi/4)$, but for $\alpha > \pi/4$ it becomes self-intersecting as depicted in Figure \ref{fig_intersection2}.
	It is clear that this signed area is exactly $\frac 12( \det(\rho_{K_t}(\alpha) v_\alpha, p_\alpha) + \det(p_\alpha, \rho_{K_t}(\pi-\alpha) v_{\pi-\alpha})$.
	By computing $p_\alpha$ in terms of $\alpha, \rho_{K_t}(\alpha)$ and $\rho_{K_t}(\pi-\alpha)$, and adding the corresponding term for the lower half space, we obtain the formula for $T_K(x)$.
	%This means that 
	%\begin{align}
		%\vol[2]{K_t \cap K_t + x}
	%&= \vol[2]{[1,\rho_K]_{C(x)}}+\vol[2]{[1,\rho_K]_{C(-x)}} + t^2 T_K(x) + o(t^2) \\ 
		%&= L(x) + t \int_{S(x)} \rho_K(v) d v + t^2 \frac 12 \int_{S(x)} \rho_K(v)^2 d v + t^2 T_K(x) + o(t^2) \\ 
	%\end{align}

	The second formula is computed easily as
\begin{align}
	\vol{K_t}
	&= \frac 12 \int_{S^1} \rho_{K_t}(v)^2 d v\\
	&= \frac 12 \int_{S^1} (1 + 2 t \rho_{K}(v) + t^2 \rho_K(v)^2 ) d v\\
	&= \pi + t \int_{S^1} \rho_{K}(v) d v + t^2 \vol{K} .\\
\end{align}

\end{proof}

We are ready to compute the second derivative of the volume, in dimension $2$.

\begin{proposition}
	\label{res_d2_vol}
	Let $K$ be a $C^2$ smooth radial set and $\bKt$ be defined by \eqref{eq_def_Ktbar}, then if $\alpha$ is given by \eqref{eq_relations_alpha_delta_s},
\begin{align}
	\label{eq_d2_vol}
	 \frac{\partial^2}{\partial t^2} \vol[2]{C_\delta \bKt} \bigg|_{t=0}
	&= \frac 1{\pi \sin(\alpha )^2} \Biggl(- \frac 1{2\pi} \left(\frac{\sin (2 \alpha )-2 \alpha}{ \sin(\alpha )}\right)^2 I_K^2 \\
	&-\frac 12 \frac{\cos(\alpha)}{\sin(\alpha)} \int_{S^1}\left[ \rho_K(v + \frac\pi2 \pm \frac\pi2\pm \alpha) \right] w_{K,v}(\alpha) d v \\
	&+ \frac 1{4\sin(\alpha)^2} \int_{S^1}\left(w_{K,v}(\alpha) \right)^2 d v + 2 \int_{S^1}\rho_K(v-\alpha +\pi ) \rho_K(v+\alpha )d v\\
	&+ 4 \cos (2 \alpha ) \vol[2]{K} \Biggr)
\end{align}
	where 
	\[\left[ \rho_K(v + \frac\pi2 \pm \frac\pi2\pm \alpha) \right] = \rho_K(v + \alpha) + \rho_K(v - \alpha) + \rho_K(v + \pi + \alpha) + \rho_K(v + \pi - \alpha).\]
\end{proposition}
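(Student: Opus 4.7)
The plan is to apply implicit differentiation to the defining identity
\[\delta k(t) = g_v(t, s_v(t)), \qquad s_v(t) = \rho_v(t) k(t)^{1/2},\]
to second order in $t$, then average the resulting formula for $\rho_v''(0)$ over $v \in S^1$. Since in dimension $2$ we have $\vol[2]{C_\delta \bKt} = \tfrac 12 \int_{S^1} \rho_v(t)^2 dv$, and $\rho_v(0) = \rho_0$ is independent of $v$, two differentiations give
\[\frac{\partial^2}{\partial t^2}\vol[2]{C_\delta \bKt}\bigg|_{t=0} = \int_{S^1} \rho_v'(0)^2 dv + \rho_0 \int_{S^1} \rho_v''(0) dv.\]
Proposition \ref{res_gk_smooth} combined with the implicit function theorem justifies the $C^2$ regularity of $\rho_v(t)$ near $t=0$ and the existence of the partial derivatives below.

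Next, I would differentiate $\delta k(t) = g_v(t, s_v(t))$ twice in $t$. The first derivative recovers $\rho_v'(0)$ exactly as in the proof of Theorem \ref{res_first_variation}, and the second derivative equation
\[\delta k''(0) = \partial_{tt} g_v + 2 \partial_{ts} g_v \cdot s_v'(0) + \partial_{ss} g_v \cdot s_v'(0)^2 + \partial_s g_v \cdot s_v''(0)\]
determines $s_v''(0)$, which is then converted to $\rho_v''(0)$ via the chain rule expansion of $s_v = \rho_v k^{1/2}$. The partial derivatives of $g_v$ at $(0, s_0)$ are supplied by Proposition \ref{res_d2_g}: from $g_v(0, s) = L(s)$ one reads off $\partial_s g_v$ and $\partial_{ss} g_v$; from $\partial_t g_v(0, s) = W_{K,v}(s)$ one reads off $\partial_{ts} g_v$; and the $t^2$-coefficient in \eqref{eq_d2_g} gives
\[\partial_{tt} g_v(0, s_0) = \int_{S(s_0 v)} \rho_K(w)^2 dw + 2 T_K(s_0 v).\]
The expansion \eqref{eq_d2_k}, $k(t) = \pi + t I_K + t^2 \vol[2]{K}$, gives $k(0), k'(0), k''(0)$.

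Finally I would integrate in $v \in S^1$. The term $\int \rho_v'(0)^2 dv$ produces both the $I_K^2$ coefficient (from the $v$-independent part of $\rho_v'(0)$) and the $\int w_{K,v}(\alpha)^2 dv$ coefficient (from the $v$-dependent part). In the term $\rho_0 \int \rho_v''(0) dv$, the integrals $\int_{S^1} W_{K,v}(s_0) dv$ and $\int_{S^1} W_{K,v}(s_0) W_{K,v}'(s_0) dv$ are reduced via the Fubini symmetrization \eqref{eq_averaged_S}, while the integral $\int_{S^1} T_K(s_0 v) dv$ is where the new geometric content of the second order appears: it produces the four-point correlation $\int_{S^1} \rho_K(v-\alpha + \pi)\rho_K(v+\alpha) dv$ along with the contribution proportional to $\vol[2]{K}\cos(2\alpha)$ arising from the $|x|^2 - 2 = 2\cos(2\alpha)$ factor in $T_K$. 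Substituting $s_0 = 2\cos\alpha$ and $L(s_0) = 2\alpha - \sin(2\alpha)$, $L'(s_0) = -2\sin\alpha$, $L''(s_0) = \cos\alpha/\sin\alpha \cdot (-1)\cdot \ldots$, $S(s_0) = 4\alpha$ rearranges everything into the stated formula.

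The main obstacle will be the bookkeeping rather than any conceptual step: the $T_K$ correction in Proposition \ref{res_d2_g} is the genuinely new geometric ingredient compared to the first-order case, and its average over $v$ must be correctly combined with the $k(t)^{1/2}$ normalization propagated through the chain rule $s_v = \rho_v k^{1/2}$, which is responsible for mixing $I_K^2$, $I_K W_{K,v}$ and $\vol[2]{K}$ into the final expression. A secondary technical point is that $\partial_s \partial_t g_v = \partial_t \partial_s g_v$ at $(0, s_0)$, which is legitimate by Proposition \ref{res_gk_smooth}; this allows us to compute $\partial_{ts} g_v(0, s_0) = W_{K,v}'(s_0)$ directly by $s$-differentiating the $t$-expansion.
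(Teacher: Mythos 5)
Your proposal follows essentially the same route as the paper's proof: implicit differentiation of $\delta = k(t)^{-1}g_v(t,s_v(t))$ to second order, conversion of $s_v''(0)$ to $\rho_v''(0)$ through $s_v = \rho_v k^{1/2}$, substitution of the partial derivatives of $g_v$ from Propositions \ref{res_d1_g} and \ref{res_d2_g} (including $\partial_{tt}g_{v,0} = \int_{S(s_0v)}\rho_K^2 + 2T_K(s_0v)$), and integration over $v$ via the Fubini symmetrization \eqref{eq_averaged_S} before passing to the variable $\alpha$. The plan is correct and matches the paper's argument in all essential steps; only the explicit algebraic bookkeeping remains to be carried out.
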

\begin{proof}

	First we compute the second derivative of $\vol{C_\delta \bKt}$ with respect to $t$ using \eqref{eq_radial_volume}, at $t=0$.
	\begin{equation}
		\label{eq_d2_vol_naked}
		 \frac{\partial^2}{\partial t^2} \vol[2]{C_\delta \bKt} \bigg|_{t=0} = \int_{S^1} \rho_v'(0)^2 d v + \rho_0 \int_{S^1} \rho_K''(0) d v.
	\end{equation}

	In order to simplify the computations we write \eqref{eq_rho} as 
	\begin{equation}
		\label{eq_rho_s}
		\delta = k(t)^{-1} g_v(t, s_v(t))
	\end{equation}
	where $s_v(t) = k(t)^{1/2} \rho_v(t)$, and take derivative with respect to $t$ at $t=0$, to obtain
	\begin{equation}
		\label{eq_d1_s}
		s_v'(0)=\frac {k_0' g_{v,0}-k_0 \partial_t g_{v,0}}{k_0 \partial_s g_{v,0}}
	\end{equation}
	where $g_{v,0} = g_v(0, s_v(0))$, $\partial_t g_{v,0} = \partial_t g_v(0, s_v(0))$.
	Take the second derivative of \eqref{eq_rho_s} with respect to $t$, at $t=0$, and use \eqref{eq_d1_s} to get
	\begin{align}
		\label{eq_d2_s}
		s_v''(0)
		&=\frac 1{k_0^2 \left(\partial_s g_{v,0}\right)^3} \Biggl( k_0 g_{v,0} \left(-2 k_0' \partial_s g_{v,0} \partial_{t,s}g_{v,0}+2 k_0' \partial_{s,s}g_{v,0} \partial_t g_{v,0}+k_0'' \left(\partial_s g_{v,0}\right)^2\right) \\
		&-\left(\partial_t g_{v,0}\right)^2 k_0^2 \partial_{s,s}g_{v,0} +2 k_0^2 \partial_s g_{v,0} \partial_t g_{v,0} \partial_{t,s}g_{v,0}-k_0^2 \left(\partial_s g_{v,0}\right)^2 \partial_{t,t}g_{v,0} \\
		&-\left(k_0'\right)^2 g_{v,0}^2 \partial_{s,s}g_{v,0}\Biggr)
	\end{align}
	The terms $\rho_v', \rho_v''$ can be computed from $s_v', s_v''$ by the relation $s_v(t) = k(t)^{1/2} \rho_v(t)$ as
	\begin{equation}
		\label{eq_d1_rho_s}
		\rho_v'(0) = s_v'(0)k_0^{-1/2} - \frac 12 s_v(0) k_0^{-3/2} k_0' \\
	\end{equation}
	and
	\begin{equation}
		\label{eq_d2_rho_s}
		\rho_v''(0) = -\frac {k_0' s_v'(0)}{k_0^{3/2}}+\frac {s_v''(0)}{\sqrt{k_0}}+\frac {3 \left(k_0'\right)^2 \rho_v(0)}{4 k_0^2}-\frac {k_0'' \rho_v(0)}{2 k_0}.
	\end{equation}

	Using Propositions \ref{res_d1_g} and \ref{res_d2_g} we get the following identities:
	\begin{equation}
		\label{eq_identities_1}
		\begin{array}{ccc}
		k_0 = \pi & k_0' = I_K & k_0'' = 2 \vol[2]{K} \\
		g_{v,0} = L(s_0) & \partial_s g_{v,0} = L'(s_0) & \partial_s g_{v,0} = L''(s_0) \\
	\end{array}
	\end{equation}
	\begin{equation}
		\label{eq_identities_2}
		\begin{array}{cc}
			\partial_t g_{v,0} = W_{K,v}(s_0)  & \partial_{t,t} g_{v,0} = \int_{S(s_0 v)} \rho_K(w)^2 d w + 2 T_K(s_0 v) \\
			\partial_{t,s} g_{v,0} = W_{K,v}'(s_0).
		\end{array}
	\end{equation}
	Notice that $g_{v,0}, \partial_s g_{v,0}, \partial_{s,s} g_{v,0}$ are independent of $v$.

	Integrating \eqref{eq_d2_s},
	\begin{align}
		\label{eq_int_d2_s}
		\int_{S^1} s_v''(0) d v
		%&= \frac 1{\pi^2 L'(s_0)^3} \left[ -2\pi I_K L(s_0) L'(s_0) \int_{S^1} W_{K,v}'(s_0) d v + 2\pi L(s_0) I_K L''(s_0) \int_{S^1} W_{K,v}(s_0) d v \right. \\
		%& + 4\pi^2 L(s_0) \vol[2]{K} L'(s_0)^2 -\pi^2 L''(s_0) \int_{S^1} W_{K,v}(s_0)^2 d v  - 2\pi I_K^2 L(s_0)^2 L''(s_0) \\
		%& + \pi^2 L'(s_0)^2 \int_{S^1} \Big( \int_{S(s_0 v)} \rho_K(w)^2 dw + 2T_K(s_0 v)\Big)  d v\\
		%&\left. + 2\pi^2 L'(s_0) \int_{S^1}W_{K,v}(s_0) W_{K,v}'(s_0) d v \right]
		&= \frac 1{\pi^2 (L')^3} \bigg[ -2\pi I_K L L' \int_{S^1} W_{K,v}' d v + 2\pi L I_K L'' \int_{S^1} W_{K,v} d v  \\
		& + 4\pi^2 L \vol[2]{K} (L')^2 -\pi^2 L'' \int_{S^1} W_{K,v}^2 d v  - 2\pi I_K^2 L^2 L'' \\
		& + \pi^2 (L')^2 \int_{S^1} \Big( \int_{S(s_0 v)} \rho_K(w)^2 d w + 2T_K(s_0 v)\Big)  d v\\
		& + 2\pi^2 L' \int_{S^1}W_{K,v} W_{K,v}' d v \bigg]
	\end{align}
	where we omitted the argument $s_0$ in the functions $W_{K,v}, W_{K,v}', L, L'$ and $L''$.

	Using a computation similar to \eqref{eq_averaged_S}, we obtain 
	\begin{equation}
		\label{eq_int_w2}
		\int_{S^1}\int_{S(s_0 v)} \rho_K(w)^2 dw dv = 2 S(s_0) \vol[2]{K}.
	\end{equation}
	Also from \eqref{eq_averaged_S},
	\begin{equation}
		\label{eq_int_wprime}
		\int_{S^1} W_{K,v}'(s_0) d v = S'(s_0) I_K.
	\end{equation}

	We combine \eqref{eq_d2_vol_naked}, \eqref{eq_d1_s}, \eqref{eq_d2_s}, \eqref{eq_d1_rho_s}, \eqref{eq_d2_rho_s} the identities \eqref{eq_identities_1} and \eqref{eq_identities_2}, and \eqref{eq_int_w2}, \eqref{eq_int_wprime}, and we get
	\begin{align}
	\label{eq_d2_vol_s_with_d2s}
 \frac{\partial^2}{\partial t^2} \vol[2]{C_\delta \bKt} \bigg|_{t=0}
		&= \frac 2{\pi^2} \left(\frac {I_K L}{\pi^{3/2} L'} \right)^2 + \frac 1{\pi (L')^2} \int_{S^1} W_{K,v}^2 d v + \frac 1{2\pi}(\rho_0 I_K)^2 \\
		&-2\frac{I_K^2 L S}{\pi^2 (L')^2}- \frac 2{\pi^{3/2}} \frac{\rho_0 I_K^2 L}{L'} + \frac{\rho_0}{\pi^{3/2}} \frac{I_K^2 S}{L'} + \rho_0 \bigg[-\frac 1{\pi^{3/2}} \frac{I_K^2 L}{L'}  \\
		& + \frac{I_K^2 S}{\pi^{3/2} L'} + \frac 1{\pi^{1/2}} \int_{S^1} s_v''(0) dv +\frac 3{2\pi} \rho_0 I_K^2 - 2 \rho_0 \vol[2]{K} \bigg].
	\end{align}

	We combine \eqref{eq_d2_vol_s_with_d2s}, with \eqref{eq_int_d2_s}, \eqref{eq_d2_s}, the identities \eqref{eq_identities_1} and \eqref{eq_identities_2}, and after lengthy but straight-forward computations we get
\begin{align}
	\label{eq_d2_vol_s}
	\frac{\partial^2}{\partial t^2} \vol[2]{C_\delta \bKt}& \Big|_{t=0}
	%&=
	%\frac 1{\pi ^2 L'(s)^3} \left(2 \pi  s L'(s) \left(\int_{S^1}W_{K,v}(s) W_{K,v}'(s)d v\right)+\pi  \left(L'(s)-s L''(s)\right) \left(\int_{S^1}W_{K,v}(s)^2d v\right) \right. \\
	%& -2 \pi  s L'(s)^2 \left(\int_{S^1}T(v,s)d v\right)-2 \pi  s^2 | K|_2 L'(s)^3+4 \pi  s | K|_2 L(s) L'(s)^2+2 s^2 I_K^2 L'(s)^3 \\
	%& -2 s I_K^2 L(s) S'(s) L'(s)+2 s I_K^2 S(s) L(s) L''(s)+2 s I_K^2 S(s) L'(s)^2-2 I_K^2 S(s) L(s) L'(s) \\
	%&\left. -2 s I_K^2 L(s)^2 L''(s)-4 s I_K^2 L(s) L'(s)^2+2 I_K^2 L(s)^2 L'(s)-2 \pi  s S(s) L'(s)^2 \vol[2]{K} \right).
	= \frac 1{\pi ^2 (L')^3} \bigg[2 \pi  s_0 L' \left(\int_{S^1}W_{K,v} W_{K,v}'d v\right)  \\
	&+\pi  \left(L'-s_0 L''\right) \left(\int_{S^1}W_{K,v}^2d v\right) -2 \pi  s_0 (L')^2 \left(\int_{S^1}T(v,s_0)d v\right) \\
	&-2 \pi  s_0^2 | K|_2 (L')^3+4 \pi  s_0 \vol[2]K L (L')^2 +2 s_0^2 I_K^2 (L')^3 \\
	& -2 s_0 I_K^2 L S' L'+2 s_0 I_K^2 S L L''+2 s_0 I_K^2 S (L')^2-2 I_K^2 S L L' \\
	&-2 s_0 I_K^2 L^2 L''-4 s_0 I_K^2 L (L')^2+2 I_K^2 L^2 L'-2 \pi  s_0 S (L')^2 \vol[2]{K} \bigg].
   \end{align}

Now we parametrize with respect to the variable $\alpha \in (0,\pi/2)$ with $s_0 = 2\cos(\alpha)$.
We have the following relations:
\begin{equation}
	\label{eq_relations_alpha}
	\begin{array}{cccc}
	s_0 = 2 \cos(\alpha) & 	\delta = L(s_0)/\pi \\
	S(s_0) = 4\alpha & S'(s_0) = -\frac 2{\sin(\alpha)} \\
	L'(s_0) = -2 \sin(\alpha) & 	L''(s_0) = \tan(\alpha)^{-1} & L(s_0) = 2 (\alpha - \cos(\alpha)\sin(\alpha))
	\end{array}
\end{equation}

To compute the term $W_{K,v}'$, observe that 
\begin{align}
	\label{eq_d1_W}
	-2 \sin(\alpha) & W_{K,v}'(2 \cos(\alpha)) 
	= w_{K,v}'(\alpha) \\
	&= \frac {\partial}{\partial \alpha} \left( \int_{v-\alpha}^{v+\alpha} \rho_K(w) d w + \int_{v+\pi-\alpha}^{v+\pi+\alpha} \rho_K(w) d w \right) \\
	&= \rho_K(v + \alpha) + \rho_K(v - \alpha) + \rho_K(v + \pi + \alpha) + \rho_K(v + \pi - \alpha) \\
	&= \left[ \rho_K(v + \frac\pi2 \pm \frac\pi2\pm \alpha) \right].
\end{align}
Using the identities \eqref{eq_relations_alpha} and \eqref{eq_d1_W} we simplify equation \eqref{eq_d2_vol_s} to obtain \eqref{eq_d2_vol}.

\end{proof}

We are ready to compute the counterexample:

\begin{proof}[Proof of Theorem \ref{res_cos_m_perturbation}]

Consider the (infinitely smooth and symmetric) radial set given by 
\begin{equation}
	\label{eq_def_Km}
	\rho_{K^m}(v) = \cos(m v)^2 = \frac 12 \cos(2 m v)+\frac 12.
\end{equation}
	(see Figure \ref{fig_setsKm}.)
	\begin{figure}
		\centering
		\caption{The sets $K_t^m$ for $m=1, 2, 3, 4$ and $t = \frac 1{2 m^2}$.}
		\label{fig_setsKm}
		\includegraphics[width=.9\textwidth]{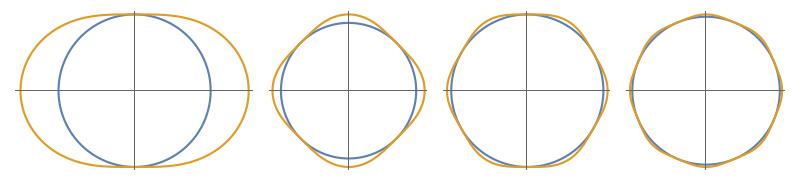}
	\end{figure}

All integrals in \eqref{eq_d2_vol} can be computed exactly using the two expressions for $\rho_{K^m}(v)$ in \eqref{eq_def_Km}.
To compute the last integral we use the identity
\[\cos(m(v-\alpha))^2 \cos(m(v+\alpha))^2 = \frac 14(\cos(2 m v) + \cos(2 m \alpha))^2.\]
Integrating every term in \eqref{eq_d2_vol} we obtain

\begin{equation}
	I_K = \pi, \quad \vol[2]{K} = \frac 38 \pi, \quad \int_{S^1}\rho_K(v-\alpha +\pi ) \rho_K(v+\alpha )d v = \frac 14 \pi  (\cos (4 \alpha  m)+2) \\
\end{equation}
\begin{align}
	w_{K,v}(\alpha) &= \frac 1{4 m} \left(\sin (2 m (\alpha +v))+\sin (2 m (\alpha +v+\pi )) - 2 \sin (2 m (v-\alpha ))\right) \\
	\int_{S^1} w_{K,v}(\alpha)^2 d v &= \frac{\pi  \left(16 \alpha ^2 m^2-\cos (4 \alpha  m)+1\right)}{2 m^2} \\
\end{align}
and
\[
	\int_{S^1}\left[ \rho_K(v + \frac\pi2 \pm \frac\pi2\pm \alpha) \right] w_{K,v}(\alpha) d v = \frac 1m \pi  (8 \alpha  m+\sin (4 \alpha  m)).
\]

Denote $F_m(\alpha) =  \frac{\partial^2 }{\partial t^2} \vol[2]{C_\delta \overline {K^m_t}} \bigg|_{t=0}$, where $\delta$ and $\alpha$ are related by \eqref{eq_relations_alpha_delta_s}.
Putting all the integrals together we get
\begin{align}
	\label{eq_def_Fm}
	F_m(\alpha) &= \frac 1{\sin(\alpha)^2}\bigg( \frac 12 \cos(2\alpha) + \frac 12 \cos (4 \alpha  m) \\ 
	& +\frac 1{8 m^2 \sin ^2(\alpha )}-\frac{\cos (4 \alpha  m)}{8 m^2 \sin ^2(\alpha)}-\frac{\sin (4 \alpha  m)}{2 m \tan (\alpha )} \bigg).
\end{align}
Every pair $m, \alpha$ for which $F_m(\alpha)$ is positive will provide us a counterexample to Question \ref{thequestion}.
To finish the proof, it remains to prove that for every $\alpha_0 \in (0,\pi/2)$ there exists $m$ such that $F_m(\alpha_0) > 0$.
    
Consider 
\[c(\alpha, m) = \frac 1{8 m^2 \sin(\alpha)^2}-\frac{\cos (4 \alpha  m)}{8 m^2 \sin ^2(\alpha)}-\frac{\sin (4 \alpha  m)}{2 m \tan (\alpha )}.\]
Equation \eqref{eq_def_Fm} can be written as
\[\sin(\alpha)^2 F_{S^m}(\alpha) = \frac 12 \cos(2\alpha) + \frac 12 \cos (4 \alpha  m) + c(\alpha, m).\]
(see Figure \ref{fig_oscilatingfunction})

	\begin{figure}
		\centering
		\caption{The function $\sin(\alpha)^2 F_m(\alpha)$ for $m=10$.}
		\label{fig_oscilatingfunction}
		\includegraphics[width=.5\textwidth]{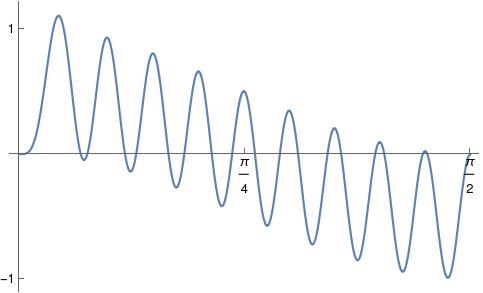}
	\end{figure}

The function $c(\alpha, m)$ tends to $0$ as $m \to \infty$, for every $\alpha \in (0, \pi/2]$.

Fix $\alpha_0 \in (0,\pi/2)$ and consider $m_0$ such that for every $m \geq m_0$, $\frac 12 \cos(2\alpha_0) + c(\alpha_0, m) > -1/2$.
This is possible since $\cos(2\alpha_0) \in (-1,1)$.

If $\alpha_0/\pi$ is a rational number, choose a suitable $m \geq m_0$ such that $\alpha_0 m/\pi$ is an integer, then $\cos(4 m \alpha_0) = 1$.
If $\alpha_0/\pi$ is not rational, the sequence $\cos(4 m \alpha_0), m\geq m_0$ is dense in $[-1,1]$ and we may choose $m \geq m_0$ so that $\cos(4 m \alpha_0)$ is arbitrarily close to $1$.

In both cases we obtain at least one value of $m$ such that $F_m(\alpha_0) > 0$.
\end{proof}

Finally we are ready to give a negative answer to Question \ref{thequestion}.
\begin{proof}[Proof of Theorem \ref{res_negative_answer}]
	Let $\delta \in (0,1)$ and take the value of $\alpha$ given by the relations \eqref{eq_relations_alpha}.
	By the proof of Theorem \ref{res_cos_m_perturbation} there is $m \in \mathbb N$ such that $F_m(\alpha) > 0$.
	Consider the radial set $K^m$ defined by \eqref{eq_def_Km} and the radial body $K^m_t = (K^m)_t$ defined by \eqref{eq_def_Kt}.
	Since the function $\rho_{K^m_t}$ converges to $1$ in the $C^\beta$ topology for every $\beta \geq 0$, and since convexity is a $C^2$ property of $\rho$, there exists $\tilde t_m>0$ such that $K_t^m$ is convex for every $t \in [0, \tilde t_m]$ (by analyzing the Gauss curvature, one can see that $K_t^m$ is convex for $t \in [0,\frac 1{2m^2}]$).
	By Theorems \ref{res_first_variation} and \ref{res_cos_m_perturbation}, there exists $t_m < \tilde t_m$ such that the function $t \mapsto \vol[2]{C_{\delta} \overline{K^m_t}}$ is increasing at $[0, t_m]$. Then
	\[\vol[2]{C_{\delta} \overline{K^m_{t_m}}} > \vol[2]{C_{\delta} \overline{K^m_0}} = \vol[2]{C_{\delta} \overline{\B}},\]
	and the proof is complete.
\end{proof}

\section{The limit as $\delta \to 1$}
\label{sec_limit}

In this section we prove Theorem \ref{res_positive_almost}.
We split the proof in two parts: first we compute the limit as $\delta \to 1$, and later we show that the limit is non-positive.

Theorem \ref{res_positive_almost} is a direct consequence of Propositions \ref{res_limit} and \ref{res_limit_negative}.

\begin{proposition}
	\label{res_limit}
	Let $K$ be a $C^2$ smooth radial set, then
	\begin{align}
		\lim_{\delta \to 1^-} \frac 1{(1-\delta)^2}  \frac{\partial^2}{\partial t^2} \vol[2]{C_\delta \bKt} \bigg|_{t=0}
		&= \frac 34 \pi \int_{S^1} \left(\frac{\rho_K(v) + \rho_K(v+\pi)}2\right)^2 d v \\
		&-\frac 12 \left( \int_{S^1} \rho_K(v) d v\right)^2 -\frac\pi4 \int_{S^1}\rho_K'(v)^2 d v + \frac \pi2 \vol{K}.
		\label{eq_limit}
	\end{align}

\end{proposition}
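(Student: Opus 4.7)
The plan is to perform an asymptotic expansion of the explicit formula \eqref{eq_d2_vol} in the parameter $\beta := \pi/2 - \alpha$, which tends to $0^+$ as $\delta \to 1^-$. Using $s_0 = 2\cos\alpha$ and $L(s_0) = 2(\alpha - \cos\alpha \sin\alpha)$, a direct Taylor expansion gives $1 - \delta = 4\beta/\pi + O(\beta^3)$, so $(1-\delta)^2 \sim 16\beta^2/\pi^2$. For the limit to exist, the sum of the five terms in the parenthesis of \eqref{eq_d2_vol} must vanish to orders $\beta^0$ and $\beta^1$; the $\beta^2$ coefficient then gives the limit after multiplying by $\pi/16$ (combining $(1-\delta)^{-2}$ with the prefactor $(\pi\sin^2\alpha)^{-1} = 1/\pi + O(\beta^2)$).

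The key structural observation is that $\tilde\rho(v) := \rho_K(v) + \rho_K(v+\pi)$ is $\pi$-periodic, and $w_{K,v}(\alpha) = \int_{v-\alpha}^{v+\alpha}\tilde\rho(w)\,dw$. Taylor expansion around $\alpha = \pi/2$ together with the identity $\tilde\rho(v-\pi/2) = \tilde\rho(v+\pi/2)$ makes the $\beta^2$ coefficient vanish, yielding $w_{K,v}(\alpha) = I_K - 2\beta\,\tilde\rho(v+\pi/2) + O(\beta^3)$; similarly, $[\rho_K(v+\pi/2 \pm \pi/2 \pm \alpha)] = 2\tilde\rho(v+\pi/2) + \beta^2\tilde\rho''(v+\pi/2) + O(\beta^4)$. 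These expansions, together with $\tan\beta = \beta + O(\beta^3)$, drive terms 2 and 3, using the identities $\int_{S^1}\tilde\rho\,dv = 2 I_K$ and $\int_{S^1}\tilde\rho''\,dv = 0$. For term 4, the substitution $u = v + \pi/2$ gives $2\int_{S^1}\rho_K(u+\beta)\rho_K(u-\beta)\,du$, whose integrand is even in $\beta$; expansion to order $\beta^2$ combined with the integration-by-parts identity $\int_{S^1}\rho_K\rho_K''\,dv = -\int_{S^1}(\rho_K')^2\,dv$ produces the $\int(\rho_K')^2$ contribution. Terms 1 and 5 are elementary expansions of $(\sin(2\alpha) - 2\alpha)/\sin\alpha$ and $\cos(2\alpha) = -\cos(2\beta)$.

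Tracking the contributions: at order $\beta^0$, terms 1 and 3 contribute $\mp\pi I_K^2/2$ and terms 4 and 5 contribute $\pm 4\vol{K}$, cancelling in pairs; at order $\beta^1$, terms 1, 2, 3 contribute (times $\beta$) $+4I_K^2, -2 I_K^2, -2 I_K^2$, summing to zero, while terms 4 and 5 are even in $\beta$; at order $\beta^2$, the net coefficient inside the parenthesis of \eqref{eq_d2_vol} is
\[
3\int_{S^1}\tilde\rho(v)^2\,dv - \frac{8}{\pi}I_K^2 - 4\int_{S^1}\rho_K'(v)^2\,dv + 8\vol{K}.
\]
Multiplying by $\pi/16$ and using $\int_{S^1}\tilde\rho^2 = 4\int_{S^1}\bar\rho^2$ for $\bar\rho := (\rho_K + \rho_K(\cdot+\pi))/2$ recovers exactly \eqref{eq_limit}. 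The main obstacle is the bookkeeping itself: the $\beta^1$ cancellation depends essentially on the $\pi$-periodicity of $\tilde\rho$ (which forces the $\beta^2$ term in the expansion of $w_{K,v}$ to vanish), and without this cancellation the limit would diverge; that such cancellations occur is consistent with the $(1-\delta)^2$ scaling already predicted by \eqref{eq_schmuck_limit}.
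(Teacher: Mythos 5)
Your proposal is correct and takes essentially the same approach as the paper: both extract the limit from the explicit formula \eqref{eq_d2_vol} by asymptotic analysis at $\alpha \to \pi/2^-$, using the same key expansion of $w_{K,v}$ in terms of the symmetrized radial function $\rho_K(v)+\rho_K(v+\pi)$. The only difference is bookkeeping — the paper groups the divergent terms into three combinations $A_1,A_2,A_3$ that each converge and evaluates them via difference quotients, whereas you expand order by order in $\beta=\pi/2-\alpha$ and verify the cancellations at orders $\beta^0$ and $\beta^1$ directly; your term-by-term coefficients and the resulting $\beta^2$ coefficient agree with the paper's computation.
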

\begin{proof}
	First notice that 
	\[ \lim_{\alpha \to \pi/2^-} \frac{1-\delta} {\frac 4\pi \cos(\alpha)} = 1,\]
	where $\delta$ and $\alpha$ are related by \eqref{eq_relations_alpha_delta_s},
	so we may replace the factor $(1-\delta)^2$ in \eqref{eq_limit} by $(\frac 4\pi \cos(\alpha))^2$.
	We rearrange some terms of \eqref{eq_d2_vol} to obtain
	\begin{align}
		\label{eq_lim_splitted}
		\frac {\pi \sin(\alpha)}{\cos(\alpha)^2}  &\frac{\partial^2}{\partial t^2} \vol[2]{C_\delta \bKt} \bigg|_{t=0}\\
		&= \frac 1 {\sin(\alpha)} A_1(\alpha) + 2 A_2(\alpha) - \frac 1{\sin(\alpha)^2} A_3(\alpha) + 8 \vol[2]{K} - \frac 2\pi I_K^2
	\end{align}
where
\begin{align}
	A_1(\alpha) &=\frac 1 { \cos(\alpha)}\left[ \frac {4\alpha}\pi I_K^2 - \frac 12 \int_{S^1} \left[ \rho_K(v + \frac\pi2 \pm \frac\pi2\pm \alpha) \right] w_{K,v}(\alpha) d v  \right] \\
	A_2(\alpha) &= \frac 1{\cos(\alpha)^2}\left[ \int_{S^1}\rho_K(v-\alpha +\pi) \rho_K(v+\alpha)d v - \int_{S^1} \rho(v)^2 d v \right]\\ 
	A_3(\alpha) &= \frac 1{\cos(\alpha)^2}\left[ \frac{2 \alpha^2}{\pi} I_K^2 - \frac 14 \int_{S^1}w_{K,v}(\alpha)^2 d v \right]. 
\end{align}
	Here we used the identities $\cos(2\alpha) = 2\cos(\alpha)^2 - 1$ and $\sin(2\alpha) = 2 \sin(\alpha) \cos(\alpha)$.
To compute the limits, first we observe that
	\[\lim_{\alpha\to\pi/2^-} \frac {I_K - w_{K,v}(\alpha)}{2\pi - 4\alpha} = \frac 12(\rho(v+\pi/2) + \rho(v-\pi/2)),\]
	since the left term is the average of $\rho_K$ in the complement of $S(\cos(\alpha) v)$.

	We compute $A_1$:
	\begin{align}
		\cos(\alpha) &A_1(\alpha)
		= \frac{4 \alpha}\pi I_K^2 + \frac 12 (2\pi - 4\alpha) \int_{S^1} \left[ \rho_K(v + \frac\pi2 \pm \frac\pi2\pm \alpha) \right] \frac{I_K - w_{K,v}(\alpha)}{2\pi - 4\alpha} d v \\
		&- \frac 12 \int_{S^1} \left[ \rho_K(v + \frac\pi2 \pm \frac\pi2\pm \alpha) \right] d v I_K \\
		&=\frac 4\pi I_K^2(\alpha - \pi/2) + 2(\pi/2 - \alpha) \int_{S^1} \left[ \rho_K(v + \frac\pi2 \pm \frac\pi2\pm \alpha) \right] \frac{I_K - w_{K,v}(\alpha)}{2\pi - 4\alpha} d v.
	\end{align}
	Since $\frac{\cos(\alpha)}{\pi/2 - \alpha} \to 1$ when $\alpha \to \pi/2$ we obtain
	\[\lim_{\alpha \to \pi/2^-} A_1(\alpha) = 8 \int_{S^1} \left(\frac{\rho_K(v) + \rho_K(v+\pi)}2\right)^2 d v - \frac 4\pi I_K^2.\]

	We compute $A_2$:

	\begin{align}
		\cos(\alpha)^2 &A_2(\alpha)
		= \int_{S^1}\rho_K(v-\alpha +\pi) \rho_K(v+\alpha)d v - \int_{S^1}\rho_K(v)^2 d v \\
		&= \frac 12 \int_{S^1} ( 2 \rho_K(v-\alpha +\pi) \rho_K(v+\alpha)d v - \rho_K(v+\alpha)^2 - \rho_K(v-\alpha +\pi)^2 ) d v \\
		&= - 2 (\pi/2-\alpha)^2 \int_{S^1} \left( \frac{\rho_K(v-\alpha +\pi) - \rho_K(v+\alpha)}{\pi-2\alpha} \right)^2 d v \\
	\end{align}
	we get
	\[\lim_{\alpha \to \pi/2^-} A_2(\alpha) = -2 \int_{S^1} \rho_K'(v)^2 d v.\]

	We compute $A_3$:
	\begin{align}
		\cos(\alpha)^2 &A_3(\alpha)
		= \frac{2 \alpha^2}{\pi} I_K^2 - \frac 14 \int_{S^1} (w_{K,v}(\alpha) - I_K)^2 d v - \frac 14 \int_{S^1} ( 2I_K w_{K,v}(\alpha) - I_K^2 ) d v \\
		&= I_K^2 \left( \frac{2 \alpha^2}{\pi} + \frac \pi 2\right) - \frac 12 I_K \int_{S^1} w_{K,v}(\alpha) d v \\
		&- 4 \left( \frac \pi 2 - \alpha \right)^2 \int_{S^1} \left(\frac{w_{K,v}(\alpha) - I_K}{2\pi - \alpha} \right)^2 d v \\
		&= I_K^2 \left( \frac{2 \alpha^2}{\pi} + \frac \pi 2 -2\alpha \right) - 4 \left( \frac \pi 2 - \alpha \right)^2 \int_{S^1} \left(\frac{w_{K,v}(\alpha) - I_K}{2\pi - \alpha} \right)^2 d v \\
		&= \frac 2\pi I_K^2 \left( \frac \pi 2 - \alpha \right)^2 - 4 \left( \frac \pi 2 - \alpha \right)^2 \int_{S^1} \left(\frac{w_{K,v}(\alpha) - I_K}{2\pi - \alpha} \right)^2 d v,
	\end{align}
	and we get
	\[\lim_{\alpha \to \pi/2^-} A_3(\alpha) = \frac 2\pi I_K^2 - 4 \int_{S^1}\left(\frac{\rho_K(v) + \rho_K(v+\pi)}2\right)^2 d v.\]
	
	Putting together all the terms $A_i$ in \eqref{eq_lim_splitted}, we get 
	\begin{multline}
		\lim_{\alpha \to \pi/2^-} \frac{\pi }{4\cos(\alpha)^2} \frac{\partial^2}{\partial t^2} \vol[2]{C_\delta \bKt} \bigg|_{t=0}
		=3 \int_{S^1} \left(\frac{\rho_K(v) + \rho_K(v+\pi)}2\right)^2 d v \\
		-\frac 2\pi \left( \int_{S^1} \rho_K(v) d v\right)^2 -\int_{S^1}\rho_K'(v)^2 d v +2 \vol{K},
	\end{multline}
	and the Proposition is proved.
\end{proof}

Finally, we shall prove that the limit of the second derivative is non-positive.
\begin{proposition}
	\label{res_limit_negative}
	For every $C^2$ smooth radial set $K$,
	\begin{multline}
		3 \int_{S^1} \left(\frac{\rho_K(v) + \rho_K(v+\pi)}2\right)^2 d v -\frac 2\pi \left( \int_{S^1} \rho_K(v) d v\right)^2 \\ -\int_{S^1}\rho_K'(v)^2 d v +2 \vol{K} \leq 0.
	\end{multline}
	Equality holds if and only if 
	\[\rho_K(\alpha) = a + b \cos(\alpha) + c \sin(\alpha) + d \cos(2\alpha) + e \sin(2\alpha)\]
	for some constants $a,b,c,d,e$.
\end{proposition}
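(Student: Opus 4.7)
My plan is to recognize the left-hand side as a quadratic functional of $\rho_K$ with natural diagonal structure in the Fourier basis, and to reduce the inequality to a simple sign check on Fourier coefficients. Expand
\[\rho_K(v) = \frac{a_0}{2} + \sum_{k\geq 1}\bigl(a_k \cos(kv) + b_k \sin(kv)\bigr).\]
The key observation is that every one of the four terms is an $L^2$ functional whose evaluation on a pure harmonic $\cos(kv)$ or $\sin(kv)$ is transparent, so Parseval reduces the whole inequality to an inequality between sequences.

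First I would record the ingredients. In dimension $2$ one has $|K| = \tfrac{1}{2}\int_{S^1}\rho_K(v)^2\,dv$, so the expression in the proposition can be rewritten purely in terms of the three functionals
\[Q_1(\rho_K) = \int_{S^1}\Bigl(\tfrac{\rho_K(v)+\rho_K(v+\pi)}{2}\Bigr)^2 dv,\qquad Q_2(\rho_K) = \Bigl(\int_{S^1}\rho_K\Bigr)^2,\qquad Q_3(\rho_K) = \int_{S^1}(\rho_K')^2,\]
together with $\int_{S^1}\rho_K^2$. Now use:
\begin{itemize}
\item $\int_{S^1}\rho_K\,dv = \pi a_0$, so $Q_2 = \pi^2 a_0^2$.
\item $\int_{S^1}\rho_K^2\,dv = \tfrac{\pi a_0^2}{2} + \pi\sum_{k\geq 1}(a_k^2+b_k^2)$.
\item $\int_{S^1}(\rho_K')^2\,dv = \pi\sum_{k\geq 1}k^2(a_k^2+b_k^2)$.
\item Since $\cos(k(v+\pi)) = (-1)^k\cos(kv)$ and similarly for $\sin$, the symmetric part $\tfrac{\rho_K(v)+\rho_K(v+\pi)}{2}$ keeps only the even-frequency components, giving $Q_1 = \tfrac{\pi a_0^2}{2} + \pi\sum_{k\geq 2,\,k\text{ even}}(a_k^2+b_k^2)$.
\end{itemize}

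Substituting these into the left-hand side of the proposition and collecting coefficients, the constant (i.e. $a_0^2$) term becomes $3\cdot\tfrac{\pi}{2} - \tfrac{2}{\pi}\cdot\pi^2 + \tfrac{\pi}{2} = 0$; the coefficient of $(a_k^2+b_k^2)$ is $(1-k^2)\pi$ for odd $k\geq 1$ and $(4-k^2)\pi$ for even $k\geq 2$. Hence the expression equals
\[\pi\sum_{k\geq 1,\,k\text{ odd}}(1-k^2)(a_k^2+b_k^2) + \pi\sum_{k\geq 2,\,k\text{ even}}(4-k^2)(a_k^2+b_k^2),\]
and every coefficient is $\leq 0$, vanishing exactly for $k\in\{1,2\}$. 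This proves the inequality and shows that equality holds if and only if $a_k=b_k=0$ for all $k\geq 3$, i.e.\ $\rho_K$ is a trigonometric polynomial of degree $\leq 2$, which is exactly the stated form with $a=a_0/2$, $b=a_1$, $c=b_1$, $d=a_2$, $e=b_2$.

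There is essentially no obstacle: the whole proof is a bookkeeping exercise once one decides to expand in Fourier series. The only point requiring minor care is the cancellation of the $a_0^2$ coefficient (which must happen, since the expression is translation-invariant in $\rho_K$ only modulo a constant shift of $K$, and more importantly so that the Petty-type equality case—ellipsoids, whose support functions in $\R^2$ have Fourier support in $\{-1,0,1\}$, while radial functions of ellipsoids centered at the origin have Fourier support in $\{-2,0,2\}$—appears naturally in the kernel). The characterization of the equality case then matches the claim in Theorem \ref{res_positive_almost} that equality corresponds to families which agree to first order with families of ellipsoids.
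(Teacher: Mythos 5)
Your proof is correct and follows essentially the same route as the paper: expand $\rho_K$ in a Fourier series, use Parseval to diagonalize the quadratic functional, and observe that the coefficient of each frequency is non-positive and vanishes exactly for frequencies $0,1,2$ (the paper uses the complex exponential basis and writes the coefficient as $4\pi((1+3\varepsilon_n)-n^2)|a_n|^2$, which matches your $(1-k^2)\pi$ and $(4-k^2)\pi$). No gaps.
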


\begin{proof}
	Since $\rho_K$ is a real periodic and continuous function we can represent it as a Fourier series
    \[\rho_K(\alpha) = \sum_{n \in \mathbb Z} a_n e^{i n \alpha}\]
    with $a_{-n} = \overline{a_n}, a_0 >0$.
    The integrals are expressed as
	\begin{equation}
		\label{eq_Fourier_identities}
		\int_{S^1} \rho_K = 2\pi a_0,\quad \int_{S^1} \rho_K^2 = 4\pi \sum_{n\geq 1} |a_n|^2 + 2\pi a_0^2, \quad \int_{S^1} (\rho_K')^2 = 4\pi \sum_{n\geq 1} n^2 |a_n|^2
	\end{equation}
	The symmetric part is
	\[ \frac{\rho_K(\alpha) + \rho_K(\alpha+\pi)}2 = \sum_{n \in \mathbb Z} \varepsilon_n a_n e^{i n \alpha}\]
	where $\varepsilon_n=1$ if $n$ is even, and $\varepsilon_n = 0$ if $n$ is odd. Using \eqref{eq_Fourier_identities} we have
	\[ \int_{S^1} \left(\frac{\rho_K(v) + \rho_K(v+\pi)}2\right)^2 d v = 4\pi \sum_{n \geq 1 } \varepsilon_n |a_n|^2 + 2\pi a_0^2\]
	and we compute
    \begin{align}
    \lim_{\alpha \to \pi/2^-} \frac{\pi F(\alpha)}{4\cos(\alpha)^2} 
	    &=3 \left(4\pi \sum_{n \geq 1} \varepsilon_n |a_n|^2 + 2\pi a_0^2 \right) -\frac 2\pi (2\pi a_0)^2 \\
	    &-\left(4\pi \sum_{n\geq 1} n^2 |a_n|^2\right) + \left(4\pi \sum_{n\geq 1} |a_n|^2 + 2\pi a_0^2\right) \\
	    &= 12 \pi \sum_{n \geq 1} \varepsilon_n |a_n|^2 - 4\pi \sum_{n\geq 1} n^2 |a_n|^2 +4\pi \sum_{n\geq 1} |a_n|^2\\
	    &= 4\pi \left(\sum_{n\geq 1} (1+3\varepsilon_n) |a_n|^2 - \sum_{n\geq 1} n^2 |a_n|^2 \right)\\
    \end{align}
    Observe that $1+3\varepsilon_n \leq n^2$ for every $n \geq 1$, then the inequality follows.

    Equality holds if and only if $a_n = 0$ for all $|n| \geq 3$, which happens if and only if $K$ has radial function
	\[\rho_K(\alpha) = a + b \cos(\alpha) + c \sin(\alpha) + d \cos(2\alpha) + e \sin(2\alpha).\]
This function is a polynomial of degree $2$ in two variables, evaluated in $v_\alpha$.
\end{proof}

\section*{Acknowledgments}
 J. Haddad was supported by Grant RYC2021-031572-I, funded by the Ministry of Science and Innovation / State Research Agency / 10.13039 / 501100011033 and by the E.U. NextGeneration EU/Recovery, Transformation and Resilience Plan
\bibliographystyle{abbrv}
\bibliography{references}

\end{document}